\newcommand{\mc}[1]{\mathcal{#1}}
\newcommand{\mb}[1]{\mathbb{#1}}
\newcommand{\empi}{\hat{\mathbb{P}}_N}
\title{Two-Stage Distributionally Robust Conic Linear Programming over 1-Wasserstein Balls\thanks{Submitted to the editors \today.
\funding{This material is based upon work supported by the U.S. Department of Energy, Office of Science, under contract number DE-AC02-06CH11357.}}}
\author{Geunyeong Byeon\thanks{School of Computing and Augmented Intelligence, Arizona State University,
              Tempe, AZ 
  (\email{geunyeong.byeon@asu.edu}).}
\and
Kaiwen Fang\thanks{School of Computing and Augmented Intelligence, Arizona State University,
              Tempe, AZ 
  (\email{kfang11@asu.edu}).}  
\and Kibaek Kim\thanks{Mathematics and Computer Science Division, Argonne National Laboratory, Lemont, IL 
  (\email{kimk@anl.gov}).}
% \footnotemark[3]
}
\begin{document}

\maketitle
\begin{abstract}
  This paper studies two-stage distributionally robust conic linear programming under constraint uncertainty over type-1 Wasserstein balls. We present optimality conditions for the dual of the worst-case expectation problem, which characterizes worst-case uncertain parameters for its inner maximization problem. 
  This condition offers an alternative proof, a counter-example, and an extension to previous works. Additionally, the condition highlights the potential advantage of a specific distance metric for out-of-sample performance, as exemplified in a numerical study on a facility location problem with demand uncertainty. A cutting-plane-based algorithm and a variety of algorithmic enhancements are proposed with a finite convergence proof under less stringent assumptions.
\end{abstract}

% REQUIRED
\begin{keywords}
distributionally robust, conic linear programs, Wasserstein
\end{keywords}%\and two-stage 
% \subclass{90C15 Stochastic Programming \and 90C25 Convex Programming \and 90C47 Minimax Problems}
% REQUIRED
\begin{MSCcodes}
90C11, 90C15, 90C25, 90C47
\end{MSCcodes}

\section{Introduction}\label{sec:intro}
\emph{Two-stage distributionally robust optimization} (TSDRO) models a decision-making process involving two types of decisions characterized by the time at which the decision is made: a \emph{here-and-now} decision $x \in \mb R^{n_x}$ made before uncertain factors $\tilde \xi \in \mb R^k$ are realized and a \emph{wait-and-see} decision $y \in \mb R^{n_y}$ determined after observing a realization $\xi$ of $\tilde \xi$ to uphold $x$ against $\xi$. We let $\Xi \subseteq \mb R^k$ denote the support of $\tilde \xi$, and $\mc P$ denotes a set of plausible probability distributions of $\tilde \xi$, which is often referred to as an \emph{ambiguity set}. The aim of TSDRO is to find %a here-and-now decision 
$x$ that hedges against a worst-case probability distribution of $\tilde \xi$ among distributions in $\mc P$. 

\subsection{Problem statement}\label{sec:problem}
We focus on a class of TSDRO in which the first-stage problem is modeled as a mixed-integer conic linear programming (conic-LP) problem and the second-stage problem is posed as a continuous conic-LP problem:
% \begin{subequations}
\begin{equation}
\begin{aligned}
    \min & \ c^T x +  \sup_{\mb P \in \mc P} \mb E_{\mb P}[Z(x, \tilde\xi)]\\ %\label{eq:TSDRO:first-stage:obj} \\
    \mbox{s.t.} %& \ Ax = b, \\
    & \ x \in \mc X := \{x \in \mc K_x: x_i \in \mb Z, \ \forall i \in \mc I, \ Ax = b\},%\label{eq:TSDRO:first-stage:variable-space}
  \end{aligned}    \label{prob:TSDRO}
\end{equation}
% \end{subequations}
where 
\begin{equation}
  Z(x,\xi) := \inf_{y \in \mc K_y}\left\{q^T y : Wy \ge h(x) + T(x)\xi \right\} \label{prob:second-stage}
  \end{equation}
computes the second-stage cost by optimizing the wait-and-see decision 
$y$ given $x$ and $\xi$. In the first stage, $\mc X$ defines the feasible region, in which $\mc I$ denotes a set of indices for integer-valued first-stage variables. $\mc K_x \subseteq \mb R^{n_x}$ and $\mc K_y \subseteq \mb R^{n_y}$ represent Cartesian products of some collections of proper cones (e.g., second-order cones, nonnegative orthants, and vectorized positive semidefinite cones). 
% $\mc K_x \subseteq \mb R^{n_x}$ represents a Cartesian product of a collection of $l$ proper cones (e.g., second-order cones, nonnegative orthants, and vectorized positive semidefinite cones); that is, $\mc K_x = \mc K_{n_1} \times \cdots \times \mc K_{n_l}$,
% where each $\mc K_{n_i} \subseteq \mb R^{n_i}$ is an $n_i$-dimensional %second-order cone $\{(u,v) \in \mb R^{n_i}: \|u\|_2 \le v\}$ or a $n_i$-dimensional nonnegative orthant $\mb R_+^{n_i}$ 
% proper cone and $\sum_{i=1}^l n_i = n_x$.
%The function $Z(x,\xi)$ maps a pair of first-stage decision and the uncertainty realization to its corresponding second-stage cost. 
% In the second-stage problem, $\mc K_y \subseteq \mb R^{n_y}$ is also a Cartesian product of a collection of proper cones as in the first stage. 
The right-hand side mapping $h : \mb R^{n_x} \rightarrow \mb R^{m_y}$ and the technology mapping $T : \mb R^{n_x} \rightarrow \mb R^{m_y  \times k}$ in the second stage are assumed to be vector-valued and matrix-valued affine mappings of $x$, respectively. %; Specifically, $h(x) = Hx + h_0$ for some $H \in \mb R^{m_y \times n_x}$ and $h_0 \in \mb R^{m_y}$, and $(T(x))_{\cdot j} = T^jx + t^j_{0}$ for some $T^j \in \mb R^{m_y \times n_x}$ and $t_0^j \in \mb R^{m_y}$ for $j\in [k]$. 
The parameters $c, b, q, A,$ and $W$ are vectors and matrices with suitable dimensions.

\begin{rema}
  % Since $h(\cdot)$ and $T(\cdot)$ are affine in $x$, 
  Note that $h(x) + T(x)\xi$ can be represented as $\hat h(\xi) - \hat T(\xi)x$ for some vector-valued and matrix-valued affine mappings $\hat h$ and $\hat T$ of $\xi$. Therefore, \eqref{prob:second-stage} models the case in which the right-hand side vector and the technology matrix of the second stage are subject to uncertainty in an affine manner. We call this case constraint uncertainty, as in \cite{hanasusanto2018conic}.
  \label{rema:uncertainty}
\end{rema}

In this paper we focus on a Wasserstein distance-based ambiguity set. % that consists of probability distributions close to a reference distribution with respect to some distance metric. 
Let $r \in [1,\infty]$, and let $\mc B(\Xi)$ denote the Borel $\sigma$-algebra of subsets of $\Xi$. Let ${\mc P^r(\Xi)}$ denote the collection of all probability distributions $\mb Q$ on $(\Xi, \mc B(\Xi))$ with a finite $r$th moment. The \emph{$r$-Wasserstein distance} between two distributions $\mb Q_1$ and $\mb Q_2$ in $\mc P^r(\Xi)$ is defined as
$$W^r(\mb Q_1 ,\mb Q_2):=\inf _{\gamma \in \Gamma (\mb Q_1 ,\mb Q_2 )}\left\{\left(\int _{\Xi\times \Xi}\|\xi_1-  \xi_2\|^r\, \mathrm{d}\gamma (\xi_1,\xi_2)\right)^{1/r}\right\},$$
where 
$\Gamma (\mb Q_1 ,\mb Q_2 )$ denotes the collection of all probability distributions on the product space $(\Xi \times \Xi, \mc B(\Xi) \otimes \mc B(\Xi))$ with marginals 
$\mb Q_1$ and $\mb Q_2$, respectively, and $\|\cdot\|$ is an $l_p$-norm in $\mb R^k$ for some $p \in [1,\infty]$.  
The \emph{$r$-Wasserstein ambiguity set} $\mc P^r$ is defined as a set of probability distributions on $\Xi$ that are within ${\underline\epsilon}$-distance of a reference distribution $\hat{\mb P}$ with regard to the Wasserstein distance $W^r(\cdot, \cdot)$. 

In this paper we use an empirical distribution as the reference. Given $N$ samples $\zeta^1, \cdots, \zeta^N \in \Xi$ of $\tilde \xi$, we let $\empi$ denote the empirical distribution $\empi:= \frac{1}{N}\sum^N_{i = 1} {\delta}_{\zeta^i}$, where ${\delta}_{\zeta^i}$ denotes the Dirac measure whose unit mass is concentrated on $\{\zeta^i\}$. Then the $r$-Wasserstein ambiguity set is defined as 
% $\mc P^r := \{\mb Q \in \mc P^r(\Xi): W^r(\mb Q, \empi) \le {\underline\epsilon}\}$.
% \begin{equation*}
    \[\mc P^r := \{\mb Q \in \mc P^r(\Xi): W^r(\mb Q, \empi) \le {\underline\epsilon}\}.\]
    % \label{eq:ambiguity-set}
% \end{equation*}
In this paper we focus on 1-Wasserstein ambiguity sets, and thus we omit the superscripts of $\mc P^r(\Xi)$, $\mc P^r$, and $W^r(\cdot, \cdot)$ when $r=1$ for the remainder of this paper.
%The Wassersten ambiguity set has several advantages over the other choices: (i) unlike divergence-based ambiguity sets, it can contain any probability distributions supported on $\Xi$ even if some points in $\Xi$ are not realized in the given samples; (ii) it has well-establised asymptotic and finite-sample guarantees \cite{esfahani2018data,gao2022finite}. %This choice of ambiguity set is especially useful in a risk-averse setting in which a decision maker wants to hedge against some perturbed samples into account. For example, this modeling choice can be useful for a power system operator who wants to decide an operational decision robust to scenarios that are close in some sense to the probablisitic forcasts of uncertain renewable resources output.
\subsection{Related literature} \label{sec:literature}
TSDRO under constraint uncertainty over 1-Wasserstein ambiguity sets has been discussed in recent literature \cite{hanasusanto2018conic,esfahani2018data,zhao2018data,gamboa2021decomposition,saif2021data,duque2022distributionally,luo2019decomposition}. All these papers, but \cite{luo2019decomposition}, focused on a special case of \eqref{prob:TSDRO} in which $\mc K_x$ and $\mc K_y$ are both nonnegative orthants, in other words, two-stage distributionally robust linear programming (TSDRLP) problems. In \cite{esfahani2018data} the authors proposed a tractable reformulation for TSDRLP; however, %$f(x) = \max_{\mathbb P \in \mc P} \mb E_{\mb P}[Z(x,\tilde \xi)]$. 
the proposed approach requires a stringent assumption that the set of all dual extreme points is available. In \cite{hanasusanto2018conic} the authors presented an alternative tractable linear programming reformulation for a special case where a variant of $l_1$-norm (i.e., $p=1$) defines the 1-Wasserstein ball under unconstrained support $\Xi$. In \cite{duque2022distributionally,gamboa2021decomposition,saif2021data}, the authors proposed algorithms for TSDRLP under right-hand side and/or constraint uncertainty in which $\Xi$ is a bounded hyperrectangle and $p=1$. For a special case of TSDRLP with an unconstrained $\Xi$, in \cite{esfahani2018data,hanasusanto2018conic} the authors showed that the worst-case expectation problem in \eqref{prob:TSDRO} reduces to a norm maximization and SAA, namely, $\frac{1}{N}\sum_{i=1}^N Z(x,\zeta^i)$. In \cite{duque2022distributionally} the authors further claimed that a similar result holds for convex conic supports. In \cite{luo2019decomposition}, a cutting surface algorithm is proposed for DRO over 1-Wasserstein balls with a bounded cost function. Other types of Wasserstein balls with $r > 1$ have been discussed in \cite{hanasusanto2018conic,xie2020tractable,bertsimas2022two}; in \cite{hanasusanto2018conic}, the authors proposed a copositive program reformulation/approximation of TSDRLP over $\mathcal P^2$, and in \cite{xie2020tractable,bertsimas2022two} the authors focused on TSDRLP with $\mathcal P^\infty$.%; In \cite{xie2020tractable}, the author provided sufficient conditions under which the function $f(x)$ can be tractable and further extended the results for mixed-binary random parameters, and in \cite{bertsimas2022two}, the authors proposed an asymptotic approximation scheme, based on overlapping linear decision rules. 

Other than the Wasserstein ambiguity set, $\mc P$ can take various forms based on the level of information one has or would like to leverage about the unknown distribution $\mb P$. If the only information is the support $\Xi$ of $\tilde \xi$, the ambiguity set may contain all probability distributions supported on $\Xi$. In this case, the worst-case expectation problem in \eqref{prob:TSDRO} reduces to a robust optimization problem that hedges against the worst-case realization of $\tilde \xi$, namely, $\max_{\xi \in \Xi} Z(x,\xi)$ \cite{ben2009robust}. Alternatively, when (estimated) moment information is available, the ambiguity set can be built as a set of all probability distributions that comply with the available moment information along with some other information that characterizes the distribution $\mb P$ (e.g., confidence sets, directional deviations) \cite{delage2010distributionally,bertsimas2010models,goh2010distributionally,wiesemann2014distributionally}. When a limited collection of samples of $\tilde \xi$ are available, likelihood-based \cite{wang2016likelihood}, statistical hypothesis testing-based \cite{bertsimas2018data}, or phi-divergence-based \cite{bayraksan2015data,jiang2018risk} methods can be used. For a comprehensive review, we refer readers to \cite{rahimian2019distributionally}. %Different choices of an ambiguity set often render varying behaviors of resultant here-and-now decisions and call for different algorithms for solutions. 

\subsection{Contributions}\label{sec:contributions}
% Unlike the previous literature focusing on TSDRLP under constraint uncertainty over 1-Wasserstein balls, w
We study a class of TSDRO with generalized inequalities that can model a variety of convex optimization problems. Our work presents a new optimality condition, an algorithm, and an interesting obervation for \eqref{prob:TSDRO} over 1-Wasserstein balls defined by any $l_p$ norms for $p\ge 1$ with a possibly unbounded $\Xi$. Our main contributions include:% the following:
\begin{itemize} 
  % \item We focus on DRO with generalized inequalities, which include as special cases linear programming, second-order programming, and semidefinite programming, and etc.
  % \item We provide a concise duality proof for DRO over 1-Wasserstein balls, which only uses Lipschitz continuity of the cost function in uncertain parameters.
  % \item We present an optimality condition for the dual of DRO over 1-Wasserstein balls, which states that the worst-case $\xi$ can always be obtained either as one of the sample points or at the boundary of the support set. For a special case when $l_1$-norm is used and the support is a hyperrectagle, the result can be strengthed %for TSDRO with 1-Wasserstein balls 
  % such that the worst-case $\xi$ can be obtained among the sample points or the boundary of a projection of the support set onto each axis of $\xi$, which provides a concise interpretation for the mixed-integer programming reformulation of TSDRLP proposed in \cite{duque2022distributionally}. 
  \item \emph{Optimality conditions.} We show that the dual of the worst-case expectation problem $\sup_{\mb P \in \mc P}\mathbb E_{\mb P}[l(\tilde\xi)]$ with a convex loss function $l$ attains the worst-case uncertainty realization $\xi$ either at one of the sample point or at a boundary point of $\Xi$, if a finite optimum exists. This finding builds a bridge with previous results by substantiating a previous result from \cite{esfahani2018data,hanasusanto2018conic} with an alternative proof, providing a counterexample to a previous claim made in \cite{duque2022distributionally}, and extending a previous result of \cite{gamboa2021decomposition} to cover unbounded $\Xi$'s. This finding also gives an implication that a certain distance metric in $\Xi$, the $l_2$-norm, might be superior to $l_1$-norm in the context of DRO over 1-Wasserstein balls, which is substantiated in the case study.
  % For a special case when $p=1$ and $\Xi$ is a (possibly unbounded) hypercube, the optimality condition becomes more stringent such that the worst-case $\xi$ can be obtained among the sample point or the boundary points of the projection of $\Xi$ onto each axis of $\xi$. This extends the previous work by \cite{gamboa2021decomposition} in which a bounded box support set is considered and further provides a lower bound on the dual variable of the worst-case expectation problem. %This finding allows us to consider only a finite number of outcomes of $\tilde \xi$ and provides a concise interpretation for the mixed-integer %programming reformulation of a 
  % separation problem proposed in \cite{duque2022distributionally}. 
  % \item a special case of \eqref{prob:TSDRO} in which $\Xi$ is unconstrained, the inner maximization problem in the dual of the worst-case expectation problem of \eqref{prob:TSDRO} reduces to SAA, which aligns with the result in \cite{esfahani2018data,hanasusanto2018conic}, and (ii) we claim that the result does not extend to a convex conic support,  providing a counterexample to the work of \cite{duque2022distributionally}.
  \item \emph{Algorithm.}
  We adopt the cutting-plane algorithm proposed in \cite{duque2022distributionally} to handle a more general class of TSDRO with generalized inequalities and unbounded $\Xi$. We prove its finite convergence to an $\epsilon$-optimal solution under a less stringent assumption. We also present some algorithmic enhancements for expediting the solution procedure. 
  \item \emph{Intriguing observation.} We conduct numerical experiments to verify the implication of our findings and assess the performance of our algorithm using a facility location problem with uncertain demand. Significantly, our numerical analyses reveal that TSDRO utilizing the $l_2$-norm consistently outperforms its $l_1$-norm counterpart in out-of-sample scenarios, demonstrating greater robustness against suboptimal choices of the parameter $\underline \epsilon$. Notably, for a certain instance, TSDRO with $l_1$-norm either gives an identical solution with the sample-average approximation (SAA) of two-stage stochastic optimization (TSSO) or that of two-stage robust optimization (TSRO), \emph{for all of the $\epsilon$ values}. In stark contrast, TSDRO utilizing the $l_2$-norm provides superior solutions to both SAA and TSRO for a range of $\epsilon$ values.
\end{itemize}

\subsection{Notation and organization of the paper}\label{sec:notations}
Throughout this paper %we let $n_v$ denote the dimension of a vector $v$ and $[a,b]_{\mb Z}$ denote the set of integers in $[a,b]$. 
we let $[n]$ denote $\{1,\cdots, n\}$ for an integer $n$, $A_{\cdot j}$ denote the $j$th column of a matrix $A$, and $v_i$ represent the $i$th component of a vector $v$. We use $e^j$ to denote a unit vector where the $j$th element is one, and all other elements are zeros; the dimension will be evident from the context. $\partial S$ denotes the boundary of a set $S$, and $\mb B:=\{0,1\}$. For a set $\mc K \subseteq \mathbb R^n$, $\mc K^*$ denotes its dual cone; that is, $\mc K^* = \{y \in \mb R^n: y^T x \ge 0, \ \forall x \in \mc K\}$. For a proper cone $\mc K$, $\preceq_{\mc K}$ denotes the generalized inequality defined by $\mc K$, i.e., $x \preceq_{\mc K} y \Leftrightarrow y-x \in \mc K$. %We let $proj(X;j)$ denote the projection of a set $X \subseteq \mb R^k$ to the $j$-th axis. 
%The notation $a:b:c$ represents a set of scalars starting from $a$, increasing by $b$, and ending at $c$, where $a$, $b$, and $c$ are scalars.

The structure of the paper is as follows. Section \ref{sec:preliminaries-and-assumptions} sets forth preliminaries and assumptions. Section \ref{sec:dro-properties} discusses special properties of the worst-case expectation problem in \eqref{prob:TSDRO}. In Section \ref{sec:algo} we present a decomposition algorithm for solving \eqref{prob:TSDRO} and its convergence result, and in Section \ref{sec:result}, we present a numerical study on the facility location problem with demand uncertainty. Section \ref{sec:conclusion} summarizes our work.

\section{Preliminaries and assumptions}\label{sec:preliminaries-and-assumptions}

\noindent The dual of \eqref{prob:second-stage} is given as
    \begin{equation}
        \sup_{\pi \ge 0} \left\{(h(x) + T(x)\xi)^T \pi : W^T \pi \preceq_{\mc K_y^*} q\right\}. \label{prob:second-stage:dual}
    \end{equation}
  % where $\mc K_y^*$ is the dual cone of $\mc K_y$. 
  When $\mc K_y$ consists of second-order cones, nonnegative orthants, and/or vectorized semidefinite cones, $\mc K_y^* = \mc K_y$ since they are self-dual. We let $\Pi$ denote the feasible region of \eqref{prob:second-stage:dual}; that  is, $\Pi = \{\pi \ge 0:W^T \pi \preceq_{\mc K_y^*} q\}$.

% We make the following assumptions:
\begin{assu}
\begin{enumerate}
    \item[(i)] Complete recourse: \eqref{prob:second-stage} is feasible for any right-hand side $h(x) + T(x)\xi$ of the affine constraints. %every $x \in \mb R^{n_x}$ and $\xi \in \mb R^k$. 
    % \item Relatively complete recourse: \eqref{prob:second-stage} is feasible for every $x \in \mc X$ and $\xi \in \Xi$. This implies $Z(x,\xi) < \infty$ for any $x \in \mc X$ and $\xi \in \Xi$.
    \item[(ii)] Recourse with dual strict feasibility: \eqref{prob:second-stage:dual} is feasible and has an interior point.
    %GAIL - has interior -- sounds odd to me. Interior  solution? region?
  \end{enumerate}\label{assum:recourse}
\end{assu}
Assumption \ref{assum:recourse}(i) implies that $\Pi$ is bounded. It %is often  assumed in the stochastic programming literature and 
can be approximately ensured by adding a slack variable to each constraint with a large penalty objective coefficient. In addition, $\Pi$ is an intersection of a nonnegative orthant and an inverse image of a closed set $\mathcal K^*_y$ under an affine mapping, and thus $\Pi$ is closed (i.e., $\Pi$ is compact).
Assumption \ref{assum:recourse}(ii) implies $Z(x, \xi) > -\infty$ for any $x \in \mb R^{n_x}$ and $\xi \in \Xi$ and ensures strong duality of the second-stage problem %for given $x$ 
\cite{luenberger2021conic}. Assuming dual feasibility for the second-stage problem is reasonable because otherwise it is either unbounded or infeasible for any $x$ and $\xi$. 
% \begin{rema} 
  For linear programs, Assumption \ref{assum:recourse} (ii) can be relaxed and require only the feasibility.% of \eqref{prob:second-stage:dual}. 
% \end{rema}

Due to Assumption \ref{assum:recourse}, $Z(x,\zeta^i)$ is finite for any $x \in \mb R^{n_x}$ and $i \in [N]$; and thus the SAA of the second-stage cost for any given $x$ has a finite optimal objective value 
\[v_{SAA}(x) := \frac{1}{N}\sum_{i \in [N]} Z(x,\zeta^i) <  \infty.\] Assumption \ref{assum:recourse} also leads to the following lemma, the proof of which is given in Appendix \ref{pf:lemm:Lipschitz}:
\begin{lemm} Under Assumption \ref{assum:recourse}, $Z(x,\cdot)$ is Lipschitz continuous for any fixed $x \in \mathbb R^{n_x}$. 
\label{lemm:Lipschitz}
\end{lemm}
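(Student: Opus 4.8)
The plan is to pass to the conic dual \eqref{prob:second-stage:dual} of the recourse problem and exploit that $Z(x,\cdot)$ is then a pointwise maximum of affine functions over a fixed compact set. By Assumption \ref{assum:recourse}(ii) strong duality holds for \eqref{prob:second-stage} (as noted after the assumption), so for every $\xi \in \mathbb R^k$,
\[
Z(x,\xi) \;=\; \max_{\pi \in \Pi}\ \big(h(x) + T(x)\xi\big)^T \pi ,
\]
and by Assumption \ref{assum:recourse}(i) the set $\Pi$ is compact, so the maximum is attained. Hence, for each fixed $x$, $Z(x,\cdot)$ is finite-valued and convex as a supremum over a compact index set of affine maps of $\xi$.

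Next I would estimate the increment. Fix $\xi_1,\xi_2 \in \mathbb R^k$ and let $\pi_1 \in \Pi$ attain $Z(x,\xi_1)$. Using $\pi_1$ as a (suboptimal) feasible point for the problem defining $Z(x,\xi_2)$,
\[
Z(x,\xi_1) - Z(x,\xi_2) \;\le\; \big(h(x)+T(x)\xi_1\big)^T\pi_1 - \big(h(x)+T(x)\xi_2\big)^T\pi_1 \;=\; \pi_1^T T(x)(\xi_1-\xi_2) .
\]
Applying H\"older's inequality, this is at most $\|T(x)^T\pi_1\|_* \,\|\xi_1-\xi_2\|$, where $\|\cdot\|_*$ denotes the dual norm of the $l_p$-norm $\|\cdot\|$ used in the Wasserstein distance. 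Since $\Pi$ is compact and $\pi \mapsto \|T(x)^T\pi\|_*$ is continuous, $L(x) := \max_{\pi \in \Pi}\|T(x)^T\pi\|_* < \infty$. Swapping the roles of $\xi_1$ and $\xi_2$ then gives $|Z(x,\xi_1)-Z(x,\xi_2)| \le L(x)\,\|\xi_1-\xi_2\|$, i.e., $Z(x,\cdot)$ is Lipschitz continuous with modulus $L(x)$.

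I do not expect a serious obstacle here: the only points needing justification are the dual representation of $Z(x,\xi)$ and the compactness of $\Pi$, and both are already supplied by Assumption \ref{assum:recourse} and the remarks following it. One could alternatively work directly on the primal \eqref{prob:second-stage}, using complete recourse together with a Hoffman-type/Lipschitzian error bound for the feasible-set map $\xi \mapsto \{y \in \mathcal K_y : Wy \ge h(x)+T(x)\xi\}$; this route is more delicate for general proper cones, whereas the dual argument requires only elementary convexity and H\"older's inequality, so I would present the dual version.
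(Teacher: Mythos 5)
Your proposal is correct and follows essentially the same route as the paper's proof: pass to the dual representation $Z(x,\xi)=\max_{\pi\in\Pi}(h(x)+T(x)\xi)^T\pi$ (strong duality from Assumption \ref{assum:recourse}(ii), compactness of $\Pi$ from (i)), bound the increment by a suboptimality argument, and apply H\"older's inequality to get a Lipschitz modulus from the compactness of $\Pi$. The only cosmetic difference is that you take $L(x)=\max_{\pi\in\Pi}\|T(x)^T\pi\|_*$, which is a slightly tighter constant than the paper's $\max_{\pi\in\Pi}\|\pi\|_q\|T(x)\|_p$; both are valid.
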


We define $v(x)$ to be the optimal objective value of the worst-case expectation problem in \eqref{prob:TSDRO} given $x$:
\begin{equation}
  v(x) = \sup_{\mb P \in \mc P} \mb E_{\mb P}[Z(x,\xi)].
  \label{prob:worst-case-expectation}
\end{equation} 

We make the following additional assumption on the uncertainty set, $\Xi$, as is often assumed in the previous literature (e.g., \cite{esfahani2018data,hanasusanto2018conic}):
\begin{assu}
  $\Xi$ is nonempty, closed, and convex.
  \label{assu:convex-uncertainty-set}
\end{assu}
  
\section{Properties of the worst-case expectation problem \eqref{prob:worst-case-expectation}}
\label{sec:dro-properties}
In this section we present some special properties of \eqref{prob:worst-case-expectation}. We start with the following proposition, %has been adapted from \cite{zhao2018data,esfahani2018data,hanasusanto2018conic,luo2019decomposition,blanchet2019quantifying,gao2022distributionally}. In those papers, variants of the following proposition require different sets of assumptions (see, e.g., \cite{zhao2018data,esfahani2018data,luo2019decomposition}) or are posed 
% the concise proof of which is given in Appendix \ref{appendix:duality-proof} to be self-contained. 
%with relatively involved proofs (see, e.g., \cite{blanchet2019quantifying,gao2022distributionally}). %To make this paper self-contained, 
%We present an alternative, concise geometric proof in Appendix \ref{appendix:duality-proof}, 
% The proof uses only the Lipschitz continuity of $Z(x,\cdot)$. 
%; for non-Lipschitzian value functions, one may use the regularization approach proposed in \cite{zhang2022stochastic}, which provides a surrogate of the original problem that makes the value functions become Lipschitz continuous. 
% A more involved proof for 
which is proved in a more general setting in  \cite{blanchet2019quantifying}.
\begin{prop} Under Assumption \ref{assum:recourse}, for any fixed $x \in \mb R^{n_x}$, %if $Z(x,\cdot)$ is Lipschitz continuous, 
  the dual of \eqref{prob:worst-case-expectation} is
\begin{equation} \min_{\lambda \ge 0} {\underline\epsilon} \lambda + \frac{1}{N} \sum^N_{i=1} \sup_{\xi \in \Xi} Z(x,\xi) - \lambda \|\xi-\zeta^i\|,\label{prob:worst-case-expectation-dual}\end{equation}
and when ${\underline\epsilon} > 0$, strong duality holds and the minimum is attained.% at some $\lambda^* \ge 0$.
\label{prop:DRO-duality}
\end{prop}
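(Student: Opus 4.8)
The plan is to follow the now-standard Lagrangian-duality argument for Wasserstein DRO, carried out in full generality in \cite{blanchet2019quantifying} (see also \cite{esfahani2018data}) and specialized here to an empirical reference measure. The first step is to recast \eqref{prob:worst-case-expectation} as an optimization over couplings. Since $\empi$ is supported on the finitely many atoms $\zeta^1,\dots,\zeta^N$, every $\gamma\in\Gamma(\mb Q,\empi)$ disintegrates as $\gamma=\frac1N\sum_{i=1}^N\delta_{\zeta^i}\otimes\mb Q_i$ with conditional distributions $\mb Q_i\in\mc P(\Xi)$; then $\mb Q=\frac1N\sum_{i=1}^N\mb Q_i$ and $\int\|\xi_1-\xi_2\|\,\mathrm d\gamma=\frac1N\sum_{i=1}^N\int_\Xi\|\xi-\zeta^i\|\,\mathrm d\mb Q_i(\xi)$, so that
\[
  v(x)=\sup\Bigl\{\tfrac1N\sum_{i=1}^N\!\int_\Xi Z(x,\xi)\,\mathrm d\mb Q_i(\xi)\ :\ \mb Q_i\in\mc P(\Xi),\ \tfrac1N\sum_{i=1}^N\!\int_\Xi\|\xi-\zeta^i\|\,\mathrm d\mb Q_i(\xi)\le{\underline\epsilon}\Bigr\}.
\]
By Lemma~\ref{lemm:Lipschitz}, $Z(x,\cdot)$ is Lipschitz with some modulus $L_x$, hence Borel measurable and of at most linear growth, so these integrals are well defined and $v(x)\le v_{SAA}(x)+L_x{\underline\epsilon}<\infty$.

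Next I would dualize the single budget constraint with a multiplier $\lambda\ge0$. Weak duality, combined with the fact that a linear functional bounded above on $\mc P(\Xi)$ has supremum equal to the supremum of its integrand over $\Xi$ (approached by Dirac measures), gives for every $\lambda\ge0$ the bound $v(x)\le{\underline\epsilon}\lambda+\frac1N\sum_{i=1}^N\sup_{\xi\in\Xi}(Z(x,\xi)-\lambda\|\xi-\zeta^i\|)$, and minimizing over $\lambda\ge0$ yields ``$\le$'' between \eqref{prob:worst-case-expectation} and \eqref{prob:worst-case-expectation-dual}. For the reverse inequality when ${\underline\epsilon}>0$, I would use a Slater-type condition: $\empi$ is feasible for the coupling problem with Wasserstein budget $0<{\underline\epsilon}$, so the budget constraint is strictly feasible, while the objective and the constraint functional are linear in $(\mb Q_1,\dots,\mb Q_N)$ over the convex set $\mc P(\Xi)^N$; this is precisely the setting in which \cite{blanchet2019quantifying} establishes a zero duality gap, once one verifies the mild hypotheses there (an upper semicontinuous, linearly bounded integrand, which holds since $Z(x,\cdot)$ is Lipschitz). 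For attainment of the minimum in \eqref{prob:worst-case-expectation-dual}, note that from $Z(x,\xi)-Z(x,\zeta^i)\le L_x\|\xi-\zeta^i\|$ and $\zeta^i\in\Xi$, whenever $\lambda\ge L_x$ each inner supremum is attained at $\xi=\zeta^i$ with value $Z(x,\zeta^i)$, so on $[L_x,\infty)$ the dual objective equals ${\underline\epsilon}\lambda+v_{SAA}(x)$, which is nondecreasing; hence an optimal $\lambda$ may be sought in the compact interval $[0,L_x]$, on which $\lambda\mapsto{\underline\epsilon}\lambda+\frac1N\sum_{i=1}^N\sup_{\xi\in\Xi}(Z(x,\xi)-\lambda\|\xi-\zeta^i\|)$ is a supremum of affine functions of $\lambda$, hence convex and lower semicontinuous, and therefore attains its minimum over that interval.

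I expect the main obstacle to be making the strong-duality step rigorous in the infinite-dimensional space of probability measures, namely justifying the interchange of $\inf_\lambda$ and $\sup_{\mb Q}$ (equivalently, the absence of a duality gap) under only ${\underline\epsilon}>0$ and the regularity from Lemma~\ref{lemm:Lipschitz}. The cleanest route is to quote the duality theorem of \cite{blanchet2019quantifying} rather than reprove it, checking that its assumptions hold in the present setting; the remaining ingredients — the disintegration of couplings, the reduction of the inner supremum over $\mc P(\Xi)$ to a pointwise supremum over $\Xi$, and the compactification of the range of $\lambda$ via Lipschitz continuity — are routine.
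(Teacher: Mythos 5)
Your proposal is correct and follows essentially the same route as the paper, which simply invokes the general duality theorem of \cite{blanchet2019quantifying} for this proposition; your added details (disintegration of couplings against the empirical measure, weak duality via Dirac measures, and confinement of the dual multiplier to $[0,L_x]$ using Lemma~\ref{lemm:Lipschitz} to get attainment) are consistent with how the paper uses the Lipschitz constant right after the proposition to handle the implicit constraint on $\lambda$.
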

% \begin{rema}
%   Note that the proof of Proposition \ref{prop:DRO-duality} uses only the Lipschitz continuity of $Z(x,\xi)$ in $\xi$ implied by Assumption \ref{assum:recourse}. Therefore, Proposition \ref{prop:DRO-duality} holds for any $\sup_{\mb P \in \mc P}\mb E_{\mb P}[f(\xi)]$ with Lipschitz continuous $f$. 
% \end{rema}
% Let $\lambda^*$ denote the optimal value of $\lambda$ to \eqref{prob:worst-case-expectation-dual}. %, the existence of which is guaranteed by Proposition \ref{prop:DRO-duality}.
Note that \eqref{prob:worst-case-expectation-dual} has an implicit constraint on $\lambda$ that \begin{align}\sup_{\xi \in \Xi} Z(x,\xi) - \lambda \|\xi - \zeta^i\| < \infty, \ \forall i\in [N].
\label{eq:implict-dual-constr}
\end{align}
Note that the Lipschitz continuity of $Z(x,\cdot)$ from Lemma \ref{lemm:Lipschitz} guarantees the existence of $\lambda \ge 0$ that satisfies the implicit constraint:
\begin{align}
\sup_{\xi \in \Xi} Z(x,\xi) - \lambda \|\xi - \zeta^i\| & \le Z(x,\zeta^i) + \sup_{\xi \in \Xi} |Z(x,\xi) - Z(x,\zeta^i)| - \lambda \|\xi - \zeta^i\| \\
& \le Z(x,\zeta^i) + \sup_{\xi \in \Xi} (L(x) - \lambda) \|\xi - \zeta^i\| \\
& = Z(x,\zeta^i) < \infty \mbox{ for } \lambda \ge L(x),
\end{align}
where $L(x)$ is the Lipschitz constant of $Z(x,\cdot)$. Therefore, \eqref{eq:implict-dual-constr} is met for a sufficiently large $\lambda$.

\subsection{Properties of optimal $\xi$'s to the inner supremum problem}
The dual \eqref{prob:worst-case-expectation-dual} has an interesting property regarding the optimal solution to its inner supremum problem. Consider the inner supremum problem associated with the $i$th sample point $\zeta^i$ given some $\lambda \ge 0$:
\begin{equation}
   \sup_{\xi \in \Xi} Z(x,\xi) - \lambda \|\xi - \zeta^i\|.%\tag{$P_i$}
  \label{prob:Pi}
\end{equation} 

\begin{theo}
  Under Assumption %s \ref{assum:recourse} and 
  \ref{assu:convex-uncertainty-set}, \eqref{prob:Pi} has an optimal solution either at a boundary point of $\Xi$ or $\zeta^i$, for each $i\in [N]$, if it has a finite optimum. Otherwise, \eqref{prob:Pi} is unbounded, meaning $\lambda$ does not meet the implicit constraint \eqref{eq:implict-dual-constr}.%is infeasible to the dual \eqref{prob:second-stage:dual}.
  \label{theo:worst-case-distribution:boundary}
\end{theo}
\begin{proof}
Note first that $Z(x,\xi)$ is a convex function in $\xi$ for any $x$, since it is the supremum of an arbitrary collection of affine functions in $\xi$; see, for example, \eqref{prob:second-stage:dual}. For each $i \in [N]$, we show that a boundary point of $\Xi$ or the sample point $\zeta^i$ becomes an optimal solution to \eqref{prob:Pi}. Pick an arbitrary point $\xi'$ on the boundary of $\Xi$. Consider a point $\xi^i(\theta)$ on the line segment connecting $\xi'$ and $\zeta^i$; that  is, $\xi^i(\theta)=\theta \xi' + (1-\theta) \zeta^i$ for some $\theta \in [0,1]$. Note that $\|\xi^i(\theta)- \zeta^i\| = \theta\|\xi' - \zeta^i\|$. Therefore, if we restrict \eqref{prob:Pi} to the line segment connecting $\zeta^i$ to $\xi'$, it becomes $
    \sup_{\theta \in [0,1]} Z(x, \zeta^i + \theta (\xi' - \zeta^i)) - \lambda\theta \|\xi'- \zeta^i\|$.
Note that the restricted problem is a convex maximization problem in $\theta \in [0,1]$. Hence, an optimal solution occurs at $\theta=0$ or $1$, that is, at $\zeta^i$ or $\xi'$. This holds for any arbitrary point we pick on the boundary of $\Xi$. Therefore, if $\Xi$ is compact, \eqref{prob:Pi} attains its finite optimum at a point in $\{\zeta^i\} \cup \partial \Xi$ for  
any $\lambda \ge 0$.

Now suppose that $\Xi$ has an unbounded feasible ray with origin $\zeta^i$; that is,  $\exists r: \tilde \xi(\alpha) := \zeta^i + \alpha r \in \Xi, \ \forall \alpha \ge 0$. Then, by restricting \eqref{prob:Pi} to the ray, it becomes 
% \begin{equation}
    $\sup_{\alpha \geq 0}  Z(x,\zeta^i + \alpha r) - \lambda \alpha \|r\|.$
    % \label{prob:inner-sup:unbounded}
% \end{equation}
Let $f_{i,r}(\alpha)$ denote the objective function:
\[f_{i,r}(\alpha)=Z(x,\zeta^i + \alpha r) - \lambda \alpha \|r\|.\]
Note that due to the convexity of $f_{i,r}$ on $\mb R_+$, %(\alpha)$, \in $\alpha$, 
% $f$ on $\mathbb R_+$ 
$f_{i,r}$ is either nonincreasing for all $\alpha\geq 0$ or increasing on $[\alpha', \infty)$ for some $\alpha'\geq 0$. If the latter is the case, $\alpha \rightarrow \infty$ will increase $f_{i,r}(\alpha)$ indefinitely, and thus \eqref{prob:Pi} is unbounded, indicating that $\lambda$ does not meet the implicit constraint. Otherwise, i.e., $f_{i,r}$ is nonincreasing for all $\alpha\geq 0$, $\alpha = 0$ will attain the supremum of $f_{i,r}$ over $\alpha \ge 0$; this implies $\zeta^i$ is an optimal solution on any unbounded rays emanating from $\zeta^i$ for all feasible values of $\lambda$. Due to the convexity assumption on $\Xi$, any point in $\Xi$ is either on a line segment connecting $\zeta^i$ to some boundary point or on a feasible ray starting from $\zeta^i$. Therefore, for any $i \in [N]$, if a finite optimum exists, a solution of the inner supremum problem of \eqref{prob:worst-case-expectation-dual} occurs in $\{\zeta^i\} \bigcup \partial\Xi$.
\end{proof}

\begin{rema}
Note that the proof of Theorem \ref{theo:worst-case-distribution:boundary} relies solely on the convexity of $Z(x,\xi)$ in $\xi$ with $\Xi$ being both closed and convex. Consequently, the property also holds for any worst-case expectation problem $\sup_{\mb P \in \mc P}\mathbb E_{\mb P}[l(\tilde\xi)]$ with a convex cost function $l(\cdot)$ and a closed convex $\Xi$. 
\end{rema}

The proof of Theorem \ref{theo:worst-case-distribution:boundary} also refines the implicit constraint \eqref{eq:implict-dual-constr} for \eqref{prob:TSDRO} as follows:
\begin{coro}
 Let $\mathcal R$ denote the set of all normalized feasible rays of $\Xi$ with respect to the $l_p$-norm, i.e., $\mc R = \{r: \xi + \alpha r \in \Xi, \ \forall \alpha \ge 0 \mbox{ for some } \xi \in \Xi, \|r\| = 1\}$. Then, \eqref{eq:implict-dual-constr} is equivalent to 
\begin{equation}\lambda \ge \pi^T T(x) r, \ \forall \pi \in \Pi, r \in \mathcal R.
\label{eq:implicit-dual-constr-2}
\end{equation}
\end{coro}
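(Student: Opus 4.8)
The plan is to read the desired characterization off the anatomy of the proof of Theorem~\ref{theo:worst-case-distribution:boundary}. That proof already shows that, for a fixed $i$, \eqref{prob:Pi} violates the implicit constraint \eqref{eq:implict-dual-constr} precisely when there is a feasible ray $r$ emanating from $\zeta^i$ along which $f_{i,r}(\alpha)=Z(x,\zeta^i+\alpha r)-\lambda\alpha\|r\|$ is eventually increasing; and since $f_{i,r}$ is convex on $\mathbb{R}_+$, ``eventually increasing'' is equivalent to a positive asymptotic slope $\lim_{\alpha\to\infty}f_{i,r}(\alpha)/\alpha>0$. So the first step I would carry out is to compute that asymptotic slope. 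Using the dual representation \eqref{prob:second-stage:dual}, $Z(x,\zeta^i+\alpha r)=\sup_{\pi\in\Pi}\big[(h(x)+T(x)\zeta^i)^T\pi+\alpha\,(T(x)r)^T\pi\big]$, and since $\Pi$ is compact by Assumption~\ref{assum:recourse}(i), dividing by $\alpha$ and letting $\alpha\to\infty$ gives $\lim_{\alpha\to\infty}Z(x,\zeta^i+\alpha r)/\alpha=\sup_{\pi\in\Pi}\pi^T T(x)r$, so the asymptotic slope of $f_{i,r}$ is $\sup_{\pi\in\Pi}\pi^T T(x)r-\lambda\|r\|$.

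Next I would assemble the equivalence. By the previous paragraph, \eqref{prob:Pi} respects the implicit constraint for the sample $\zeta^i$ iff $\sup_{\pi\in\Pi}\pi^T T(x)r\le\lambda\|r\|$ for every feasible ray $r$ from $\zeta^i$; since $\Xi$ is closed and convex, its feasible rays form the recession cone and do not depend on the base point, so the intersection over $i\in[N]$ collapses. Normalizing $\|r\|=1$ then rewrites this as $\lambda\ge\sup_{\pi\in\Pi}\pi^T T(x)r$ for all $r\in\mathcal{R}$, which is the same as $\lambda\ge\pi^T T(x)r$ for all $\pi\in\Pi$, $r\in\mathcal{R}$, i.e.\ \eqref{eq:implicit-dual-constr-2}. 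The implication \eqref{eq:implict-dual-constr}$\Rightarrow$\eqref{eq:implicit-dual-constr-2} then falls out immediately: just restrict the supremum defining \eqref{prob:Pi} to a feasible ray $\zeta^i+\alpha r$.

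The converse, \eqref{eq:implicit-dual-constr-2}$\Rightarrow$\eqref{eq:implict-dual-constr}, is where the real work lies, and I expect it to be the main obstacle. The point-by-point decomposition used in the proof of Theorem~\ref{theo:worst-case-distribution:boundary} (every $\xi\in\Xi$ lies on a segment from $\zeta^i$ to a boundary point or on a feasible ray) shows finiteness at each $\xi$, but not a \emph{uniform} upper bound, since the boundary points may recede to infinity along directions that themselves approach a recession direction. To close this I would argue by contradiction: take a maximizing sequence $\xi_n\in\Xi$ with $Z(x,\xi_n)-\lambda\|\xi_n-\zeta^i\|\to+\infty$; Lipschitz continuity of $Z(x,\cdot)$ (Lemma~\ref{lemm:Lipschitz}) forces $\|\xi_n-\zeta^i\|\to\infty$; extract a limit direction $u^\ast$ of $(\xi_n-\zeta^i)/\|\xi_n-\zeta^i\|$, which must be a recession direction of $\Xi$, hence $u^\ast\in\mathcal{R}$; then bound $Z(x,\xi_n)\le Z(x,\zeta^i)+\|\xi_n-\zeta^i\|\sup_{\pi\in\Pi}\pi^T T(x)\big((\xi_n-\zeta^i)/\|\xi_n-\zeta^i\|\big)$ and pass to the limit using continuity of the support function $u\mapsto\sup_{\pi\in\Pi}\pi^T T(x)u$ together with \eqref{eq:implicit-dual-constr-2} evaluated at $u^\ast$. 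This closes the gap cleanly whenever the limiting slope at $u^\ast$ is strictly below $\lambda$; the delicate residual case is an exact equality $\sup_{\pi\in\Pi}\pi^T T(x)u^\ast=\lambda$, for which one needs to control how fast $\Xi$ curves away from the ray. I would dispatch that case by restricting to polyhedral $\Xi$ (where the Minkowski--Weyl decomposition into a polytope plus a finitely generated cone makes the bound immediate on each generator), or handle it with a separate curvature estimate; identifying the weakest hypothesis on $\Xi$ under which the equality case cannot cause unboundedness is the crux of the argument.
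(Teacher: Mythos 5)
Your first two steps are exactly the paper's argument: the paper defines $f_{i,r}(\alpha)=\sup_{\pi\in\Pi}\{(h(x)+T(x)\zeta^i)^T\pi+\alpha(\pi^TT(x)r-\lambda\|r\|)\}$ and reads off that nonincreasingness along every feasible ray is the same as $\pi^TT(x)r\le\lambda\|r\|$ for all $\pi\in\Pi$ and all feasible rays $r$, i.e.\ \eqref{eq:implicit-dual-constr-2}; your asymptotic-slope computation and the necessity direction (restrict the supremum in \eqref{prob:Pi} to a ray) are the same calculation. Where you diverge is the converse \eqref{eq:implicit-dual-constr-2}$\Rightarrow$\eqref{eq:implict-dual-constr}: the paper disposes of it in one line, asserting that \eqref{eq:implict-dual-constr} holds if and only if every $f_{i,r}$ is nonincreasing, on the strength of the segment-to-boundary/ray decomposition in the proof of Theorem \ref{theo:worst-case-distribution:boundary}, whereas you stop and declare the equality case $\sup_{\pi\in\Pi}\pi^TT(x)u^\ast=\lambda$ unresolved. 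Measured against the corollary as stated, your proposal is therefore incomplete: you deliver necessity, and sufficiency only under a strict inequality at the limiting direction or under polyhedrality of $\Xi$, not the claimed unconditional equivalence.

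That said, the obstruction you isolate is genuine and is precisely what the paper's one-line converse glosses over: boundedness of $f_{i,r}$ along every ray through $\zeta^i$ does not give a uniform bound over $\Xi$ when the boundary recedes to infinity while bending toward a recession direction. Concretely, take $\Xi=\{\xi\in\mathbb{R}^2:\xi_2\ge\xi_1^2\}$ (closed, convex), $\zeta^i=(0,0)$, $p=2$, a second stage with $T(x)=I$ and dual feasible set $\Pi=[0,2]^2$, and $\lambda=2$: the only normalized recession direction is $(0,1)$, so \eqref{eq:implicit-dual-constr-2} holds, yet along the boundary points $(t,t^2)$ the objective of \eqref{prob:Pi} equals $2t+2t^2-2\sqrt{t^2+t^4}+O(1)=2t+O(1)\to\infty$, so \eqref{eq:implict-dual-constr} fails. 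So the converse direction really does require either strict inequality in \eqref{eq:implicit-dual-constr-2} or a structural hypothesis on $\Xi$ (e.g.\ polyhedral, or more generally compact set plus recession cone), under which your Minkowski--Weyl estimate closes the argument; your limiting-direction contradiction argument, combined with Lemma \ref{lemm:Lipschitz}, handles every case except this boundary one, and no argument can handle the boundary case in the stated generality. In short: your proof is incomplete as a proof of the printed statement, but your refusal to accept the pointwise ray/segment decomposition as a uniform bound identifies a real gap in the paper's own proof, and your proposed repair is the correct one for the polyhedral supports (e.g.\ boxes) used elsewhere in the paper.
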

\begin{proof}
Let $f_{i,r}(\alpha)=\sup_{\pi \in \Pi} \{(h(x) + T(x)\zeta^i)^T \pi + \alpha (\pi^T T(x) r - \lambda  \|r\|)\}$, as defined in the proof of Theorem \ref{theo:worst-case-distribution:boundary}. From the proof, it follows that $\lambda$ satisfies \eqref{eq:implict-dual-constr} if and only if 
\begin{align*}
&f_{i,r}(\alpha) \mbox{ is nonincreasing on }\mathbb R_+, \forall i \in [N]\mbox{ and a feasible ray } r \mbox{ of } \Xi,\\
 & \Leftrightarrow \pi^T T(x) r - \lambda  \|r\| \le 0, \ \forall  \pi \in \Pi \mbox{ and a feasible ray } r \mbox{ of } \Xi\\
 & \Leftrightarrow \lambda \ge \pi^T T(x) r, \ \forall \pi \in \Pi, r \in \mathcal R.
\end{align*}
\end{proof}

In addition, when $l_1$-norm is used and $\Xi$ is $\{\xi: l_i \le \xi_i \le u_i, i \in [k]\}$ with $l_i$'s and $u_i$'s on the extended real line, a region of $\Xi$ where \eqref{prob:Pi} attains its optimum further shrinks down significantly. Essentially, \eqref{prob:Pi} finds its optimal solution among a finite number of points, if it has a finite optimum. Let $\mathcal J_-$ (and $\mathcal J_+$) denote the set of indices $j$ such that $l_j = -\infty$ (and $u_j=\infty$). 
\begin{prop}
  Under Assumptions \ref{assum:recourse} and \ref{assu:convex-uncertainty-set}, if $p=1$ and $\Xi=\{\xi: l_j \le \xi_j \le u_j, j \in [k]\}$ with $l_j$'s and $u_j$'s on the extended real line, \eqref{prob:Pi} has an optimal solution $\xi^*$ satisfying $\xi^*_j \in \partial [l_j,u_j]$ or $\xi^*_j = \zeta^i_j$ for each $j \in [k]$, if it has a finite optimum. 
  \label{prop:worst-case-distribution:boundary:l1}
\end{prop}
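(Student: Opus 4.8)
The plan is to exploit the separability of the $l_1$-norm and run a coordinate-by-coordinate exchange argument on top of Theorem~\ref{theo:worst-case-distribution:boundary}. The obstacle to applying that theorem's line-restriction trick verbatim is that the objective $g(\xi) := Z(x,\xi) - \lambda\|\xi - \zeta^i\|_1$ of \eqref{prob:Pi} is the sum of a convex function of $\xi$ and the \emph{concave} function $-\lambda\|\xi - \zeta^i\|_1$, hence in general neither convex nor concave. The observation that unlocks the coordinate-wise refinement is this: if all components of $\xi$ except $\xi_j$ are frozen, then $\xi_j \mapsto -\lambda|\xi_j - \zeta^i_j|$ is \emph{affine} on each of the two subintervals $[l_j,\zeta^i_j]$ and $[\zeta^i_j,u_j]$ (intersected with $[l_j,u_j]$); since $Z(x,\cdot)$ restricted to a coordinate line is still convex, $g$ restricted to $\xi_j$ is convex on each of these two subintervals, so its maximum over $[l_j,u_j]$ is attained at one of $l_j$, $\zeta^i_j$, $u_j$.

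Concretely I would proceed as follows. First, invoke Theorem~\ref{theo:worst-case-distribution:boundary} (under Assumptions~\ref{assum:recourse} and \ref{assu:convex-uncertainty-set}) to obtain an optimal solution $\xi^{(0)}$ of \eqref{prob:Pi}, which exists because the optimum is assumed finite. Second, sweep the coordinates $j = 1,\dots,k$: given $\xi^{(j-1)}$, let $\xi^{(j)}$ agree with $\xi^{(j-1)}$ except that its $j$th component is chosen to maximize $g$ over $\xi_j \in [l_j,u_j]$ with the other components fixed at $\xi^{(j-1)}$. By the paragraph above, such a maximizer can be taken in $\partial[l_j,u_j]\cup\{\zeta^i_j\}$ when $l_j,u_j$ are finite; when $j\in\mathcal J_-$ or $j\in\mathcal J_+$ the relevant subinterval is a half-line, and since the finiteness of the optimum of \eqref{prob:Pi} forces $g$ to be bounded above along that coordinate direction, the (convex) restriction is monotone toward its finite endpoint $\zeta^i_j$, so the supremum is again attained at $\zeta^i_j$. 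In every case $\xi^{(j)}\in\Xi$ because $\Xi$ is a box. Third, optimality is preserved along the sweep: $\xi^{(j-1)}$ is feasible for the $j$th one-dimensional subproblem, so $g(\xi^{(j)})\ge g(\xi^{(j-1)})$; inductively $g(\xi^{(k)})\ge g(\xi^{(0)})$, which already equals the optimal value, while $g(\xi^{(k)})$ cannot exceed it since $\xi^{(k)}\in\Xi$. Hence $\xi^{(k)}$ is optimal, and since a coordinate is never modified after it is processed, $\xi^{(k)}_j\in\partial[l_j,u_j]\cup\{\zeta^i_j\}$ for every $j$. Setting $\xi^* := \xi^{(k)}$ gives the claim, and since each coordinate then has at most three candidate values, $\xi^*$ ranges over a finite set, which recovers the ``finite number of points'' remark preceding Proposition~\ref{prop:worst-case-distribution:boundary:l1}.

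I expect the main difficulty to be the handling of the unbounded directions in the second step: one must argue carefully that finiteness of the optimum of \eqref{prob:Pi} (equivalently, feasibility of $\lambda$ for the implicit constraint \eqref{eq:implict-dual-constr}) transfers to boundedness of each one-dimensional restriction, so that a maximizer at $\zeta^i_j$ genuinely exists rather than merely being approached in the limit. This is essentially the coordinate-wise counterpart of the feasible-ray reasoning already used in the proof of Theorem~\ref{theo:worst-case-distribution:boundary}, applied now to the $k$ axis directions $\pm e^j$, and the bookkeeping that the earlier-processed coordinates remain on the boundary or at $\zeta^i$ throughout the sweep is the only other point requiring care.
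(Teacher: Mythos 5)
Your proof is correct, but it takes a genuinely different route from the paper's. The paper proves Proposition \ref{prop:worst-case-distribution:boundary:l1} by passing to the dual representation $Z(x,\xi)=\max_{\pi\in\Pi}\pi^T(h(x)+T(x)\xi)$, exchanging the maximizations, and observing that for each fixed $\pi$ the objective $\sum_j\bigl((\pi^TT(x))_j\xi_j-\lambda|\xi_j-\zeta^i_j|\bigr)$ is separable into one-dimensional \emph{concave} piecewise-linear problems, each of which is either unbounded (on a coordinate with an infinite bound, which is exactly the case excluded by the finite-optimum hypothesis and which feeds directly into the explicit $\lambda$-constraints of the ensuing corollary) or attains its maximum at $l_j$, $u_j$, or the kink $\zeta^i_j$; attainment of the overall optimum then comes for free from the finiteness of the resulting candidate grid. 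You instead stay with the primal function $g(\xi)=Z(x,\xi)-\lambda\|\xi-\zeta^i\|_1$, split each coordinate interval at the kink so that the restriction is \emph{convex} on $[l_j,\zeta^i_j]$ and $[\zeta^i_j,u_j]$, and run a coordinate-exchange sweep from an optimal $\xi^{(0)}$ supplied by Theorem \ref{theo:worst-case-distribution:boundary}, handling infinite endpoints via the standard fact that a convex function bounded above on a half-line is monotone toward its finite endpoint (boundedness being inherited from the finite optimum of \eqref{prob:Pi} since the coordinate line stays inside the box). Both arguments are sound; yours is more elementary and never touches the conic dual, so it extends verbatim to any convex loss $l(\xi)$ in the spirit of Remark 3.3, whereas the paper's version is tailored to the conic structure. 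The trade-offs are that the paper's proof establishes attainment without presupposing an attained optimizer and simultaneously produces the coordinate-wise unboundedness characterization reused in Corollary 3.5, while your sweep leans on Theorem \ref{theo:worst-case-distribution:boundary} for the existence of $\xi^{(0)}$ (legitimate, since the theorem as stated asserts an optimal solution under the same finite-optimum hypothesis) and, as you note yourself, requires the bookkeeping that processed coordinates are never revisited.
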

\begin{proof} 
\begin{figure}
    \centering
    \includegraphics[width=0.7\textwidth]{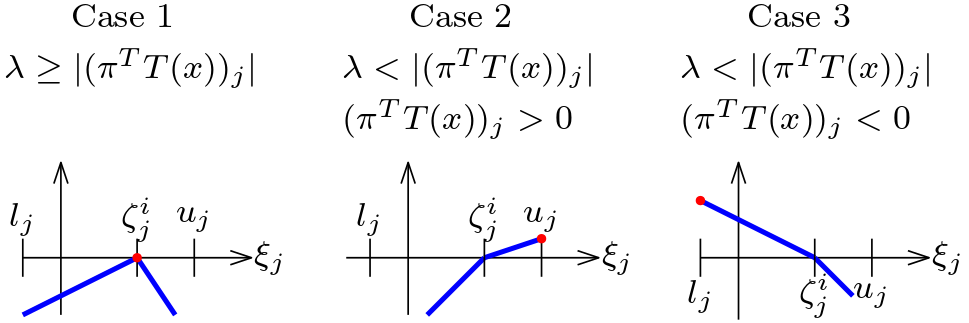}
    \caption{Three cases of inner supremum problem when $p=1$ for a box constrained $\Xi$; thick lines indicate the graph of the objective function $(\pi^T T(x))_j \xi_j - \lambda |\xi_j-\zeta^i_j|$ and red dots indicate the optimum for each case}
    \label{fig:l1-proof}
\end{figure}
Note that by using the dual representation of $Z(x,\xi)$, \eqref{prob:Pi} becomes
%\max_{\pi \in \Pi}\max_{\xi \in \Xi} \left\{(h(x) + T(x)\xi)^T \pi - \lambda^* \|\xi-\zeta^i\|_1\right\}\\= 
 % $\max_{\pi \in \Pi} \pi^T h(x) + \max_{\xi \in \Xi} \sum_{j \in [k]} ((\pi^T T(x))_j \xi_j - \lambda^* |\xi_j-\zeta^i|),
% $
\begin{multline*} \max_{\pi \in \Pi}\max_{\xi \in \Xi} \left\{(h(x) + T(x)\xi)^T \pi - \lambda \|\xi-\zeta^i\|_1\right\}\\
  = \max_{\pi \in \Pi} \pi^T h(x) + \max_{\xi \in \Xi} \sum_{j \in [k]} ((\pi^T T(x))_j \xi_j - \lambda |\xi_j-\zeta_j^i|),
\end{multline*}
%Since $h(\xi)$ and the columns of $T(\xi)$ are affine in $\xi$, $h(\xi) - T(\xi)x$ can be represented as $h'(x) + T'(x)\xi$ for some affine functions $h'$ and $T'$ of $x$. 
the inner maximization problem of which is separable into a collection of one-dimensional concave maximization problems: 
$$\sum_{j \in [k]} \max_{\xi_j \in [l_j, u_j]}(\pi^T T(x))_j \xi_j - \lambda |\xi_j-\zeta^i_j|.$$
Note that for $j \in \mathcal J_-$ (or $j \in \mathcal J_+$), if there exists $\pi \in \Pi$ such that $(\pi^T T(x))_j \xi_j - \lambda |\xi_j-\zeta^i_j|$ is decreasing (or increasing) in $\xi_j$, then the problem becomes unbounded; see Figure \ref{fig:l1-proof}. Otherwise, $\xi_j^*$ occurs among $\partial [l_j,u_j]$ or $\zeta^i_j$.
\end{proof}

Proposition \ref{prop:worst-case-distribution:boundary:l1} extends the work by \cite{gamboa2021decomposition} by allowing for an unbounded box uncertainty set. Additionally, the implicit constraints on $\lambda$, i.e., \eqref{eq:implicit-dual-constr-2}, can be posed explicitly in this case as follows:
\begin{coro}
$\lambda$ meets \eqref{eq:implicit-dual-constr-2} if and only if 
\begin{equation}\lambda \ge \sup_{\pi \in \Pi}\pi^T T(x)_{\cdot j}, \ \forall j \in \mc J_+, \lambda \ge \sup_{\pi \in \Pi}-\pi^T T(x)_{\cdot j}, \ \forall j \in \mc J_-,\label{eq:lambda-bound:l1}\end{equation}
which is equivalent to 
\begin{subequations}
\begin{align}
\exists (\nu^j)_{j \in \mc J_+}, (\mu^j)_{j \in \mc J_-}: \ &\lambda \ge q^T \nu^j, W \nu^j \ge T(x)_{\cdot j}, \nu^j \in \mc K_y,\forall j \in \mc J_+,\\
&\lambda \ge q^T \mu^j,W \mu^j \ge -T(x)_{\cdot j},  \mu^j \in \mc K_y,\forall j \in \mc J_-.\end{align}    \label{eq:explicit-dual-constr}
\end{subequations}
% \begin{align*}
% f_{i,e^j}(\alpha) = \max_{\pi \in \Pi} \pi^T (h(x) + T(x) \zeta^i) + \alpha(\pi^T (T(x))_{\cdot j} - \lambda) \mbox{ is nonincreasing on } \mathbb R_+, \ \forall j \in \mathcal J_+,\\
% f_{i,-e^j}(\alpha) = \max_{\pi \in \Pi} \pi^T (h(x) + T(x) \zeta^i) + \alpha(-\pi^T (T(x))_{\cdot j} - \lambda) \mbox{ is nondecreasing on } \mathbb R_+, \ \forall j \in \mathcal J_-.
% \end{align*}
% since any other feasible ray can be expressed as a nonnegative combination of extreme rays. This holds if and only if 
\end{coro}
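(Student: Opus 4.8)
The plan is to establish the two asserted equivalences in turn: first, that the general implicit-constraint characterization \eqref{eq:implicit-dual-constr-2} collapses to \eqref{eq:lambda-bound:l1} once $\Xi$ is a box and $\|\cdot\|$ is the $l_1$-norm; and second, that each scalar bound in \eqref{eq:lambda-bound:l1} is the explicit form of a conic feasibility system obtained by strong duality.

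For the first equivalence, I would describe $\mathcal R$ explicitly for $\Xi = \{\xi : l_j \le \xi_j \le u_j, \ j \in [k]\}$. A recession direction $r$ of $\Xi$ may have $r_j$ positive only when $u_j = \infty$ (i.e., $j \in \mc J_+$) and negative only when $l_j = -\infty$ (i.e., $j \in \mc J_-$); hence, under the $l_1$-norm, every $r \in \mathcal R$ is a convex combination of the finitely many unit generators $\{e^j : j \in \mc J_+\} \cup \{-e^j : j \in \mc J_-\}$, and each such generator lies in $\mathcal R$ (using $\Xi \neq \emptyset$, Assumption \ref{assu:convex-uncertainty-set}). Since $\pi^T T(x) r$ is linear in $r$, for each fixed $\pi$ we obtain $\sup_{r \in \mathcal R} \pi^T T(x) r = \max\bigl(\{\pi^T T(x)_{\cdot j} : j \in \mc J_+\} \cup \{-\pi^T T(x)_{\cdot j} : j \in \mc J_-\}\bigr)$. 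Taking the supremum over $\pi \in \Pi$ — finite because $\Pi$ is compact by Assumption \ref{assum:recourse}(i) — and using that $\lambda$ upper-bounds a finite maximum iff it upper-bounds every term, \eqref{eq:implicit-dual-constr-2} becomes precisely \eqref{eq:lambda-bound:l1}; when $\Xi$ is bounded, $\mathcal R$, $\mc J_+$, and $\mc J_-$ are empty and both conditions hold vacuously. This is the same computation, read at the level of asymptotic slopes, that underlies the proof of Proposition \ref{prop:worst-case-distribution:boundary:l1}: the one-dimensional subproblem in a coordinate $j \in \mc J_+$ (resp.\ $j \in \mc J_-$) fails to be bounded above exactly when some $\pi \in \Pi$ renders its asymptotic slope $(\pi^T T(x))_j - \lambda$ (resp.\ $-(\pi^T T(x))_j - \lambda$) positive.

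For the second equivalence, I would recognize $\sup_{\pi \in \Pi}\pi^T T(x)_{\cdot j}$ as the conic-LP dual of $\min\{q^T \nu^j : \nu^j \in \mc K_y, \ W\nu^j \ge T(x)_{\cdot j}\}$, which is the second-stage problem \eqref{prob:second-stage} with the right-hand side replaced by the single column $T(x)_{\cdot j}$. By Assumption \ref{assum:recourse}, complete recourse makes this minimization problem feasible (it is feasible for every right-hand side), and dual strict feasibility is a property of $\Pi$ alone, so the Slater argument that yields strong duality and attainment for $Z(x,\xi)$ applies verbatim; hence $\sup_{\pi \in \Pi}\pi^T T(x)_{\cdot j} = \min\{q^T \nu^j : \nu^j \in \mc K_y, \ W\nu^j \ge T(x)_{\cdot j}\}$. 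Therefore $\lambda \ge \sup_{\pi \in \Pi}\pi^T T(x)_{\cdot j}$ holds iff some feasible $\nu^j$ satisfies $q^T \nu^j \le \lambda$, and the symmetric argument with right-hand side $-T(x)_{\cdot j}$ and multiplier $\mu^j$ treats $j \in \mc J_-$; conjoining these existential statements over $j \in \mc J_+$ and $j \in \mc J_-$ gives \eqref{eq:explicit-dual-constr}.

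The only step needing care is the strong-duality claim: one must check that the recourse assumptions, stated for the second stage with right-hand side $h(x) + T(x)\xi$, still deliver strong duality and attainment when the right-hand side is $\pm T(x)_{\cdot j}$. This is immediate, since complete recourse is assumed for an arbitrary right-hand side and dual strict feasibility does not involve the right-hand side, with finiteness of the optimum following from compactness of $\Pi$; the rest is the routine rewriting of a ``$\lambda$ dominates a supremum'' statement as a family of scalar inequalities and of ``$\lambda \ge \min\{\cdots\}$'' as the existence of a feasible point of objective value at most $\lambda$.
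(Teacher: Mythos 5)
Your proposal is correct and follows essentially the same route as the paper: you reduce the ray condition \eqref{eq:implicit-dual-constr-2} to the finitely many generators $\pm e^j$ by writing every normalized recession direction of the box as a convex combination of them (the paper does this via subadditivity of the supremum, you via linearity in $r$ for fixed $\pi$, which is the same computation), and you then dualize the supremum problems over $\Pi$ to get \eqref{eq:explicit-dual-constr}. Your explicit check that Assumption \ref{assum:recourse} still gives strong duality and primal attainment for the right-hand sides $\pm T(x)_{\cdot j}$ merely fills in a step the paper leaves implicit ("taking duals"), so no substantive difference remains.
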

\begin{proof}
The set of all extreme feasible rays of a box-constrained $\Xi$, normalized by the $l_1$-norm, is $e^j, \forall j \in \mathcal J_+$, $-e^j, \ \forall j \in \mathcal J_-$. Suppose \eqref{eq:implicit-dual-constr-2} holds for all of these extreme rays, i.e., \eqref{eq:lambda-bound:l1} holds.
Note that any normalized non-extreme feasible ray $r \in \mathcal R$ can be expressed as $r = \sum_{j \in \mathcal J_+}\alpha_{j} e^j - \sum_{j \in \mathcal J_-}\beta_{j} e^j$ with $\sum_{j \in \mathcal J_+}\alpha_j + \sum_{j \in \mathcal J_-}\beta_j = 1$. Therefore, if \eqref{eq:lambda-bound:l1} holds, \eqref{eq:implicit-dual-constr-2} holds for all $ r \in \mathcal R$, since
\begin{align*}
&\sup_{\pi \in \Pi} \pi^TT(x)(\sum_{j \in \mathcal J_+}\alpha_{j} e^j - \sum_{j \in \mathcal J_-}\beta_{j} e^j)\\
&\le \sum_{j \in \mathcal J_+}\alpha_{j}\sup_{\pi \in \Pi} \pi^TT(x)_{\cdot j}+ \sum_{j \in \mathcal J_-}\beta_{j} \sup_{\pi \in \Pi}(- \pi^TT(x)_{\cdot j}).
% \\
% &\le \lambda
\end{align*}

Note that \eqref{eq:lambda-bound:l1} is equivalent to \eqref{eq:explicit-dual-constr} since they can be obtained by taking duals of the supremum problems on the right-hand sides.
\end{proof}

Therefore, \eqref{prob:worst-case-expectation-dual} becomes equivalent to the following problem:
\begin{align*} \min_{\lambda \ge 0} \ & {\underline\epsilon} \lambda + \frac{1}{N} \sum^N_{i=1} \sup_{\xi_j \in \partial [l_j,u_j] \cup\{ \zeta^i_j\}} Z(x,\xi) - \lambda \|\xi-\zeta^i\|_1\\
\mbox{s.t.} \ & \eqref{eq:explicit-dual-constr}.
 \end{align*}

% The following corollary follows as a consequence of Proposition \ref{prop:worst-case-distribution:boundary}:
% \begin{coro}
%   Under Assumptions \ref{assum:recourse} and \ref{assu:convex-uncertainty-set}, there exists a worst-case distribution that supports on at most $N$ number of points among the boundary of $\Xi$ or the sample points $\{\zeta^i\}_{i=1,\cdots,N}$.
%   \label{coro:worst-case-distribution}
% \end{coro}
% \textbf{Proof.} Let $(\lambda^*, \xi_1^*, \cdots, \xi_N^*)$, where $\xi_i^* \in \zeta^i \cup \partial\Xi$ for each $i=1,\cdots, N$, denote the optimal solution of \eqref{prob:worst-case-expectation-dual} which are guranteed to exists due to Theorem \ref{theo:DRO-duality} and Proposition \ref{prop:worst-case-distribution:boundary}. Consider $\mb Q = \frac{1}{N}\sum_{i=1}^N {\underline\epsilon}_{\xi^*_i}$

\begin{rema}[Extremal distribution over a box constrained $\Xi$]
If optimal $\mb P^*$ to \eqref{prob:worst-case-expectation}, i.e., the extremal distribution that gives the worst-case expectation, exists, it is concentrated on $\{\xi': \xi' \in \arg\eqref{prob:Pi}\}$ \cite{blanchet2019quantifying}. Especially for a box-constrained $\Xi$, Theorem \ref{theo:worst-case-distribution:boundary} and Proposition \ref{prop:worst-case-distribution:boundary:l1} imply that when $p = 2$, TSDRO may encompass a significantly wider array of distributions compared to what TSDRO would encompass when $p = 1$; see Figure \ref{fig:ext-dist}. 
\begin{figure}
    \centering
    \includegraphics[width=0.6\textwidth]{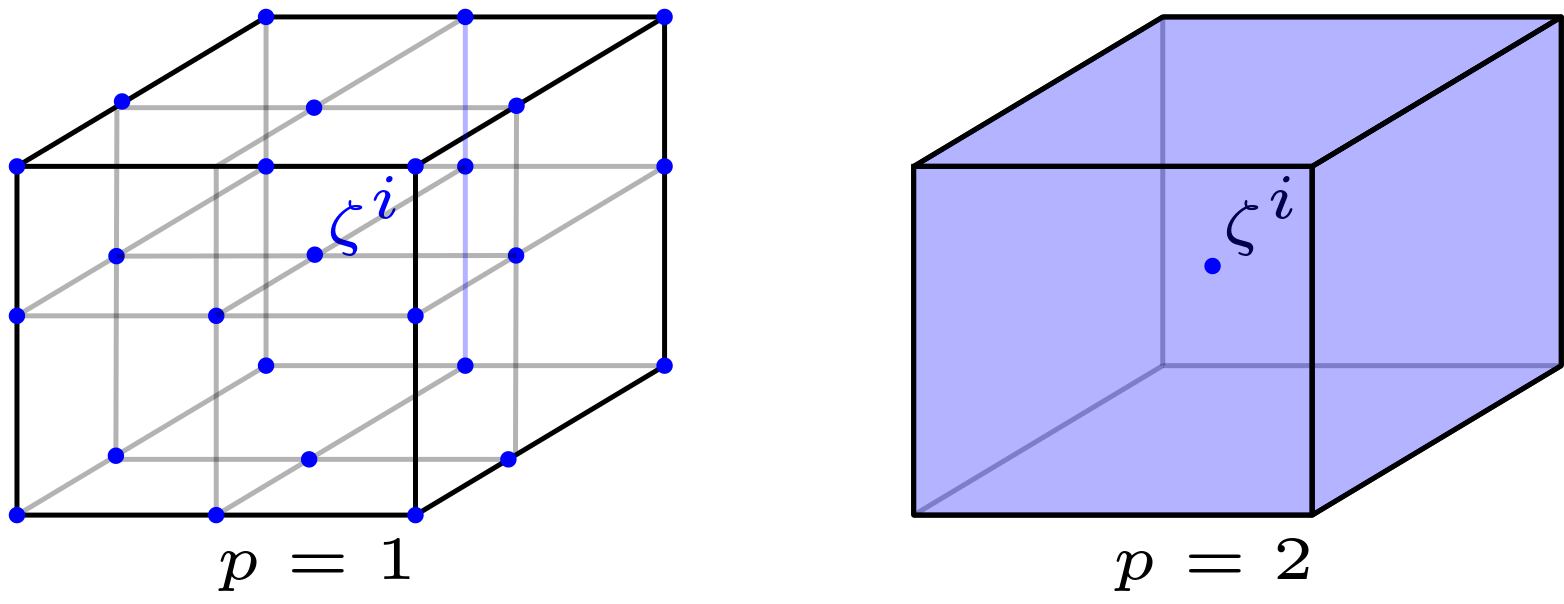}
    \caption{Potential support of the extremal distribution when $p=1$ and $p=2$ for a box-constrained $\Xi \subseteq \mathbb R^3$, indicated in blue}
    \label{fig:ext-dist}
\end{figure}
\label{rema:extremal-distribution}
\end{rema}

\subsection{Relation to a previous finding in \cite{esfahani2018data,hanasusanto2018conic} on unconstrained $\Xi$}
For a special case of unconstrained $\Xi$, Theorem \ref{theo:worst-case-distribution:boundary} gives an alternative proof to a result in \cite{esfahani2018data,hanasusanto2018conic}:
\begin{coro}
  If $\Xi$ is $\mb R^k$, the optimal objective value $v(x)$ of \eqref{prob:worst-case-expectation-dual} will always differ from that of the SAA counterpart---that is, $v_{SAA}(x) = \mb E_{\empi}[Z(x,\xi)]$---by some constant; more specifically, $v(x) = {\underline\epsilon}\lambda^*  + v_{SAA}(x)$, where $\lambda^*$ is the optimal solution to \eqref{prob:worst-case-expectation-dual}.
  \label{coro:unbounded}
\end{coro}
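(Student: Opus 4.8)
The plan is to apply Theorem~\ref{theo:worst-case-distribution:boundary} in the degenerate case $\Xi = \mathbb R^k$, where the boundary $\partial\Xi$ is empty, and combine this with the structure of the dual problem \eqref{prob:worst-case-expectation-dual}. First I would observe that when $\Xi = \mathbb R^k$, Theorem~\ref{theo:worst-case-distribution:boundary} forces each inner supremum \eqref{prob:Pi} to either be attained at the sample point $\zeta^i$ itself or to be unbounded; since the boundary is empty, the ``boundary point'' alternative disappears entirely. Consequently, for any $\lambda\ge 0$ that satisfies the implicit constraint \eqref{eq:implict-dual-constr}, we have $\sup_{\xi\in\mathbb R^k} Z(x,\xi) - \lambda\|\xi-\zeta^i\| = Z(x,\zeta^i)$ for every $i\in[N]$.

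Next I would substitute this evaluation back into \eqref{prob:worst-case-expectation-dual}. For any feasible $\lambda$ the objective becomes exactly ${\underline\epsilon}\lambda + \frac{1}{N}\sum_{i=1}^N Z(x,\zeta^i) = {\underline\epsilon}\lambda + v_{SAA}(x)$, which is an affine, nondecreasing function of $\lambda$ over the feasible set $\{\lambda \ge 0 : \lambda \text{ satisfies } \eqref{eq:implict-dual-constr}\}$. By the Corollary following Theorem~\ref{theo:worst-case-distribution:boundary} (or directly from Lemma~\ref{lemm:Lipschitz}), this feasible set is a closed interval $[\lambda_{\min}, \infty)$ for some $\lambda_{\min}\ge 0$ — in fact, with $\Xi=\mathbb R^k$ every ray is feasible, so $\lambda_{\min} = \sup_{\pi\in\Pi,\,r\in\mathcal R}\pi^T T(x) r$, which is finite since $\Pi$ is compact. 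Minimizing the affine function ${\underline\epsilon}\lambda + v_{SAA}(x)$ over $[\lambda_{\min},\infty)$ with ${\underline\epsilon}>0$ attains its minimum at $\lambda^* = \lambda_{\min}$, giving $v(x) = {\underline\epsilon}\lambda^* + v_{SAA}(x)$. When ${\underline\epsilon}=0$, strong duality need not hold, so I would restrict the statement to ${\underline\epsilon}>0$ as Proposition~\ref{prop:DRO-duality} does; alternatively one notes the primal value is trivially $v_{SAA}(x)$ when ${\underline\epsilon}=0$ anyway.

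The only mild subtlety — the main thing to be careful about rather than a genuine obstacle — is the edge case where the feasible set for $\lambda$ is empty, i.e.\ where no $\lambda$ meets \eqref{eq:implict-dual-constr} and hence \eqref{prob:worst-case-expectation} is itself $+\infty$. Lemma~\ref{lemm:Lipschitz} rules this out: $Z(x,\cdot)$ is Lipschitz with constant $L(x)$, so any $\lambda\ge L(x)$ is feasible, and the argument already recorded just before Section~3.1 shows the supremum then equals $Z(x,\zeta^i)$. So the chain Lipschitz continuity $\Rightarrow$ feasible $\lambda$ exists $\Rightarrow$ Theorem~\ref{theo:worst-case-distribution:boundary} pins the inner sup at $\zeta^i$ $\Rightarrow$ affine minimization in $\lambda$ closes the proof cleanly. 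I expect the write-up to be short, essentially three lines after invoking the theorem.
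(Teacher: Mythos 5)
Your proposal is correct and follows essentially the same route as the paper: since $\partial\Xi=\emptyset$ when $\Xi=\mb R^k$, Theorem~\ref{theo:worst-case-distribution:boundary} forces each inner supremum \eqref{prob:Pi} to be attained at $\zeta^i$ for any feasible $\lambda$, so the dual objective collapses to ${\underline\epsilon}\lambda + v_{SAA}(x)$ and evaluating at $\lambda^*$ gives the claim. Your extra observations (feasibility of some $\lambda$ via Lemma~\ref{lemm:Lipschitz}, and that $\lambda^*$ is the smallest feasible multiplier) are consistent with the paper's surrounding discussion in Remark~\ref{rema:unbounded-Xi}, just spelled out in more detail than the paper's two-line argument.
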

\begin{proof} 
Note that if $\Xi$ is $\mb R^k$ and $\lambda$ satisfies the implicit constraint \eqref{eq:implicit-dual-constr-2}, $\zeta^i$ will become an optimal solution to \eqref{prob:Pi} for each $i \in [N]$ by Theorem \ref{theo:worst-case-distribution:boundary}. Therefore, the optimal objective value of \eqref{prob:worst-case-expectation} will always differ from that of the SAA counterpart by ${\underline\epsilon}\lambda^* $. 
\end{proof}

\begin{rema}
  The proof of Corollary \ref{coro:unbounded} also suggests that when $\Xi = \mb R^k$ and $\underline\epsilon >0$, the optimal $\lambda^*$ is the smallest value of $\lambda$ that meets \eqref{eq:implicit-dual-constr-2} for $\mathcal R = \{r:\|r\|_p = 1\}$. The existence of such $\lambda^*$ is guaranteed by the Lipschitz continuity of $Z(x,\xi)$ in $\xi$, but the problem of finding the optimal value may be nontrivial. Note that $\lambda^* =\sup_{\|r\|_p = 1} \sup_{\pi \in \Pi}\pi^T T(x)r = \sup_{\pi \in \Pi} \|T(x)^T \pi\|_*$, where $\|\cdot\|_*$ denote the dual norm of $\|\cdot\|_p$. This result is also consistent with a result of \cite{esfahani2018data,hanasusanto2018conic} for unbounded $\Xi$.
  \label{rema:unbounded-Xi}
\end{rema}

From Corollary \ref{coro:unbounded} and Remark \ref{rema:unbounded-Xi}, for $\Xi = \mb R^k$ and $p=1$, \eqref{prob:TSDRO} is equivalent to 
\begin{subequations}
\begin{align}
  \min_{x \in \mc X, y^i \in \mc K_y, \forall i \in [N], \lambda \ge 0} \ & c^T x +  {\underline\epsilon} \lambda + \frac{1}{N}\sum_{i \in [N]} q^T y^i\\
  \mbox{s.t.} \ & \lambda \ge \sup_{\pi \in \Pi} T(x)_{\cdot j}^T \pi, \lambda \ge \sup_{\pi \in \Pi} -T(x)_{\cdot j}^T \pi,\ \forall j \in [k], \label{eq:lambda:lb}\\
  % & \ \forall j=1,\cdots,k, \label{eq:lambda:lb2}\\
  & Wy^i \ge h(x) + T(x) \zeta^i, \ \forall i \in [N].
  %\\
  % & x \in \mc X, y^i \in \mc K_y, \ \forall i \in [N], \lambda \ge 0.
\end{align}
\end{subequations}
Again, \eqref{eq:lambda:lb} can be replaced with the following set of constraints obtained by taking duals of the supremum problems on the right-hand sides: 
\[\lambda \ge q^T \nu^j, W \nu^j \ge T(x)_{\cdot j}, \lambda \ge q^T \mu^j,W \mu^j \ge -T(x)_{\cdot j}, \nu^j, \mu^j \in \mc K_y,\ \forall j \in [k].\]
% \begin{subequations}
  % \begin{align*}
    % \min \ & c^T x +  {\underline\epsilon} \lambda + \frac{1}{N}\sum_{i \in [N]} q^T y^i\\
    % \mbox{s.t.} \ & \lambda \ge q^T \nu^j, W \nu^j \ge T(x)_{\cdot j}, \nu^j \in \mc K_y, \ \forall j \in [k],\\
    % & \lambda \ge q^T \mu^j,W \mu^j \ge -T(x)_{\cdot j}, \mu^j \in \mc K_y,\ \forall j \in [k], \\
    % & Wy^i \ge h(x) + T(x) \zeta^i, \ \forall i \in [N],\\
    % & x \in \mc X, y^i \in \mc K_y, \ \forall i \in [N], \lambda \ge 0.\\
    % & \nu^j, \mu^j \in \mc K_y, \ \forall j=1,\cdots,k.
  % \end{align*}
  % \end{subequations}
\noindent This yields an equivalent polynomially sized mixed-integer conic-LP problem, which aligns with the result of \cite{hanasusanto2018conic} in the TSDRLP setting.

% \noindent When $p>1$, finding such $\lambda^*$ becomes a NP-hard problem.

\subsection{Counter-example to a previous finding in \cite{duque2022distributionally} on convex conic $\Xi$}
 Theorem \ref{theo:worst-case-distribution:boundary} sheds light on the potential inaccuracy of a claim from \cite{duque2022distributionally}.
The authors of \cite{duque2022distributionally} claimed that a variant of Corollary \ref{coro:unbounded} holds for a case where $\Xi$ is a convex cone. However, according to Theorem \ref{theo:worst-case-distribution:boundary}, the worst-case $\xi$ in \eqref{prob:Pi} may be attained at a boundary point of a convex cone, not necessarily at the sample point $\zeta^i$, if $\Xi \neq \mathbb R^k$. To be specific, the work \cite{duque2022distributionally} considered TSDRLP in which only the right-hand side vector $\hat h(\xi)$ (see Remark \ref{rema:uncertainty} for the notation) is subject to uncertainty and the technology matrix is deterministic. Let us denote the right-hand side of \eqref{prob:second-stage} as $Bx + \xi$. For TSDRLP, the feasible region of the second-stage dual problem $\Pi$ is a polyhedron, and we denote the set of extreme points of $\Pi$ as $\mbox{ext}(\Pi)$. Within this specific problem class, the authors asserted that $v(x) = K {\underline\epsilon} + v_{SAA}(x)$, where $K = \min\{\|z\|_*:z-\pi \in \Xi^*, \forall \pi \in \mbox{ext}(\Pi)\}$; this implies that the problem reduces to the SAA counterpart with $K$ being the optimal choice of $\lambda$. %The result is surprising since 
We present a counter-example to this claim in Example \ref{exam:counter}.
  \begin{example} 
  Note that TSDRLP under right-hand side uncertainty reduces to a worst-case expectation problem when the first-stage feasible region is a singleton, i.e., $\mathcal X = \{\hat x\}$ for some $\hat x \in \mathbb R^{n_x}$. Therefore, for simplicity, we present an instance of a worst-case expectation problem that contradicts Theorem 3.1 in \cite{duque2022distributionally}:  $$v=\sup_{\mathbb P \in \mathcal P} \mathbb E_{\tilde \xi \sim \mathbb P} [Z(\tilde\xi)],$$ where $Z(\xi) = \begin{array}{rl}\min \ & 2y_1 + y_2 + 2y_3 + y_4\\
\mbox{s.t.} \ & -y_1 + y_2 + y_5 - y_6 = -1+\xi_1\\
& -y_3 + y_4 - y_5 + y_6 = -1+\xi_2\end{array}$, $\tilde\xi$ is supported on $\Xi = \mathbb R_+^2$,
and $\mathcal P$ is a 1-Wasserstein ball of radius ${\underline\epsilon} >0$ centered at a reference distribution $\hat{\mathbb P}=\delta_{\zeta^1}$ in which $\zeta^1= \left(\begin{array}{c}1\\1\end{array}\right)$, with $l_1$-norm used as a distance metric over $\Xi$. Then, $\Pi = \left\{
    % \left(\begin{array}{c}\pi_1\\ \pi_2\end{array}\right)
    (\pi_1; \pi_2): -2 \le \pi_1 \le 1, -2 \le \pi_2 \le 1, \pi_1 - \pi_2 = 0\right\}, \mbox{ ext}(\Pi) =\left\{
      % \left(\begin{array}{c}1 \\1\end{array}\right), \left(\begin{array}{c}-2\\ -2\end{array}\right)
      (1;1), (-2;-2)
      \right\}.$ Note that $K = \min\left\{\lVert z\rVert_\infty:z \ge 
      % \left(\begin{array}{c} 1\\ 1\end{array}\right),
      (1;1),
      z \ge 
      % \left(\begin{array}{c}-2 \\ -2\end{array}\right)
      (-2;-2)
      \right\} = \left\lVert
      % \left(\begin{array}{c}1\\1\end{array}\right)
      (1;1)
      \right\rVert_\infty = 1$, and $v_{SAA} = Z(\zeta^1) = \max_{\pi \in \mbox{ext}(\Pi)} 0= 0$. According to Theorem 3.1 of \cite{duque2022distributionally}, $v = {\underline\epsilon} K + v_{SAA}={\underline\epsilon}$.
      
      However, from the dual representation (see Proposition \ref{prop:DRO-duality}) we have 
      \begin{align*}v 
      % =  \inf_{\lambda \ge 0} &{\underline\epsilon}\lambda + \sup_{\xi \in \mb R^2_+,\pi \in \mbox{ext}(\Pi)} \left(\pi^T\left(\left(\begin{array}{c}-1\\-1\end{array}\right) + \xi\right) - \lambda \left\|\xi - \left(\begin{array}{c}1\\1\end{array}\right)\right\|_1\right)\\
      = \inf_{\lambda \ge 0} & {\underline\epsilon}\lambda + t\\
       \mbox{s.t.} \ & t \ge \sup_{\xi \in \mb R^2_+} \left(\left(\begin{array}{c}-2\\-2\end{array}\right)^T\left(\left(\begin{array}{c}-1\\-1\end{array}\right) + \xi\right) - \lambda \left\|\xi - \left(\begin{array}{c}1\\1\end{array}\right)\right\|_1\right)\\
       & t \ge \sup_{\xi \in \mb R^2_+} \left(\left(\begin{array}{c}1\\1\end{array}\right)^T\left(\left(\begin{array}{c}-1\\-1\end{array}\right) + \xi\right) - \lambda \left\|\xi - \left(\begin{array}{c}1\\1\end{array}\right)\right\|_1\right)\\
       = \inf_{\lambda \ge 0} & {\underline\epsilon}\lambda + t\\
       \mbox{s.t.} \ & t \ge 2\left(\sup_{\xi \in \mb R_+} 2-2\xi - \lambda |\xi - 1|\right)\\
       & t \ge 2\left(\sup_{\xi \in \mb R_+} -1+\xi - \lambda |\xi - 1|\right)\\
       = \inf_{\lambda \ge 0} & {\underline\epsilon}\lambda + t\\
       \mbox{s.t.} \ & t \ge \begin{cases}0 & \mbox{ if } \lambda \ge 2 \\ -2\lambda+4 &\mbox{ o.w.}\end{cases}\\
       & t \ge \begin{cases}0 & \mbox{ if } \lambda \ge 1 \\ \infty &\mbox{ o.w.}\end{cases},
      \end{align*}
      where the second equality holds since the objective function of the supremum problem in each constraint is separable with regard to $\xi_1$ and $\xi_2$. The last equality follows by applying a similar analysis done in Figure \ref{fig:l1-proof}.
      
      Therefore, $v = \min\{{\underline\epsilon} + 2, 2{\underline\epsilon}\} > {\underline\epsilon} = K {\underline\epsilon} + v_{SAA}$, which leads to a contradiction. In addition, when ${\underline\epsilon} > 2$, $\lambda^* = 1 = K$, but the inner-supremum problem attains the worst-case $\xi$ at $(0;0)$, which is a boundary point of $\Xi$, not at the sample point $(1;1)$.
  \label{exam:counter}
  \end{example}
  % \begin{equation}v(x) = \inf_{\lambda \ge 1} \left\{{\underline\epsilon} \lambda + \max_{\xi \in \mathbb R_+^2} \max \left\{\left(\begin{array}{c}-100\\-100\end{array}\right)^T \left(\xi - \left(\begin{array}{c}1\\1\end{array}\right)\right) - \lambda \left\|\xi-\left(\begin{array}{c}1\\1\end{array}\right)\right\|_1, 0\right\}\right\}\end{equation}
% \end{rema}

\section{Algorithm}\label{sec:algo}
We adopt the cutting plane algorithm proposed in \cite{duque2022distributionally} for our setting and prove its finite convergence. In addition, we present some implementation enhancements for efficient execution of the algorithm. For the subsequent discussion in this section, we introduce an additional assumption to regulate the algorithm:
\begin{assu}
$\mc X$ is compact.
\label{assu:compactX}
\end{assu}
% \subsubsection{The master problem} \label{sec:algorithm:master}
% With a box support set $\Xi$, bounded for $p>1$ and possibly unbounded for $p=1$, 
Based on the properties of the worst-case expectation problem obtained in Section \ref{sec:dro-properties}, \eqref{prob:TSDRO} can be represented as
\begin{subequations}
\begin{align}
\min_{x \in \mc X, \lambda \ge 0, t, (\theta_{\xi})_{\xi \in \Xi}} \ & c^Tx + {\underline\epsilon} \lambda + \frac{1}{N} \sum_{i \in [N]} t_i\\
% \mbox{s.t.} \ & t_i \ge g_i(x,\lambda), i \in [N],\label{eq:epi-graph-g}\\
\mbox{s.t.} \ & t_i \ge \theta_\xi - \lambda \|\xi - \zeta^i\|, \ \forall i \in [N], \xi \in \Xi, \label{eq:orig-master:epi}\\
& \theta_\xi \ge \pi^T(h(x)+T(x)\xi), \ \forall \pi \in \Pi, \xi \in \Xi,\label{eq:orig-master:dual}\\
& \lambda \ge \pi^T T(x) r, \ \forall \pi \in \Pi, r \in \mathcal R \label{eq:orig-master:lambda_bounds}.
\end{align}\label{prob:orig-master}
\end{subequations}
While $\Xi$ in \eqref{eq:orig-master:epi} and \eqref{eq:orig-master:dual} can be substituted with $\{\zeta^i\} \cup \partial\Xi$ and $\{\zeta^i\}_{i=1}^N \cup \partial\Xi$, respectively, we maintain the use of $\Xi$ for the sake of simplicity.
% Note that $\theta_\xi = Z(x,\xi)$ for $\xi \in \Xi$.
Consider a relaxation ($M$) of \eqref{prob:orig-master} which include a finite subset of constraints from \eqref{eq:orig-master:epi}, \eqref{eq:orig-master:dual}, and \eqref{eq:orig-master:lambda_bounds} for some $\Xi' \subset \Xi$ with $\Xi' \supseteq \{\zeta^i\}_{i=1}^N$, $\mc R' \subseteq \mc R$, $\Pi_\xi \subset \Pi$ for each $\xi \in \Xi'$ in \eqref{eq:orig-master:dual}, and $\Pi_r \subset \Pi$ for each $r \in \mc R'$ in \eqref{eq:orig-master:lambda_bounds}.

Let $(\hat x, \hat \lambda, \hat t,\hat\theta)$ be the solution to this relaxation ($M$) in an appropriate dimension. 
If the solution satisfies \eqref{eq:orig-master:epi}, \eqref{eq:orig-master:dual}, and \eqref{eq:orig-master:lambda_bounds}, then it becomes the optimal solution of \eqref{prob:orig-master}. 
To determine if \eqref{eq:orig-master:dual} is met by the solution for any $\xi \in \Xi'$, we can solve for $Z(\hat x,\xi) = \max_{\pi \in \Pi}\pi^T(h(\hat x)+T(\hat x)\xi)$ and check if $\hat \theta_\xi \ge Z(\hat x, \xi)$; otherwise the optimal $\hat \pi$ of $Z(\hat x, \xi)$ can be added to $\Pi_\xi$ to cut off the solution.
To verify if \eqref{eq:orig-master:lambda_bounds} is satisfied by the solution for each $r \in \mathcal R'$, one can solve 
\begin{align*}
U(\hat x, r):=\max_{\pi \in \Pi}\pi^T T(\hat x) r = \min_{y \in \mc K_y} \ & q^T y \\
\mbox{s.t.}\ &   Wy \ge T(\hat x) r \end{align*} 
and check if $\hat \lambda \ge U(\hat x, r)$; otherwise, the optimal solution $\hat \pi$ of $U(\hat x, r)$ can be added to $\Pi_r$ to cut off the solution. 
If \eqref{eq:orig-master:dual} and \eqref{eq:orig-master:lambda_bounds} hold for all $\xi \in \Xi'$ and $r \in \mathcal R'$, one may solve 
\begin{equation}g_i(\hat x, \hat \lambda) = \sup_{\xi \in \Xi, \pi \in \Pi} \pi^T(h(\hat x) + T(\hat x) \xi)- \hat\lambda \|\xi- \zeta^i\|.\label{subprob}\end{equation}
If it has a finite optimum, one may verify that $\hat t_i \ge g_i(\hat x, \hat \lambda)$ for all $i \in [N]$, in which case, \eqref{eq:orig-master:epi}, \eqref{eq:orig-master:dual}, and \eqref{eq:orig-master:lambda_bounds} are met. Otherwise, the optimal $\hat \xi$ of $g_i(\hat x, \hat \lambda)$ can be added to $\Xi'$ and then the optimal $\hat\pi$ of $Z(\hat x, \hat \xi)$ to $\Pi_{\hat \xi}$ to cut off the solution. If it is unbounded, a normalized unbounded ray $\hat r$ of $\Xi$ can be added to $\mathcal R'$ and the optimal $\hat \pi$ of $U(\hat x, \hat r)$ to $\Pi_{\hat r}$ to cut off the solution.

\begin{algorithm}[t!]
  \caption{Algorithm for solving TSDRO}\label{algo}
  \begin{algorithmic}[1]
    % \Require $\epsilon > 0$
    \Require optimality tolerance $\epsilon >0$; feasibility tolerance $\epsilon_f > 0$
    \State $\texttt{k} \gets 0$; $\texttt{LB} \gets -\infty$; $\texttt{UB} \gets \infty$; $\Xi' \gets \{\zeta^1, \cdots, \zeta^N\}$; $\mc R' \gets \emptyset$; $\Pi_\xi \gets \emptyset \ \forall \xi \in \Xi$; $\Pi_r \gets \emptyset \ \forall r \in \mc R$; $(M) \gets$ relaxation of \eqref{prob:orig-master} with a subset of \eqref{eq:orig-master:epi}, \eqref{eq:orig-master:dual}, and \eqref{eq:orig-master:lambda_bounds} respectively for $\Xi'$, $\mc R'$, $\Pi_\xi$'s, and $\Pi_r$'s
    % \State $\lambda^k \gets 0$
    % \State $\texttt{LB} \gets -\infty$; $\texttt{UB} \gets \infty$
    \While{$|\texttt{UB}-\texttt{LB}| > \epsilon$}
      \State Solve ($M$); $v^\texttt{k} \gets$ its optimal value; $(x^\texttt{k}, \lambda^\texttt{k}, t^\texttt{k}, (\theta_\xi^{\texttt{k}})_{\xi \in \Xi'}) \gets$ its optimal solution
      \State \texttt{cut\_added} $\gets \texttt{False}$
      \For{$\xi \in \Xi'$}
      \State Solve for $Z(x^{\texttt{k}}, \xi)$; $\pi^\texttt{k} \gets$ its optimal dual solution
      \If{$\theta_\xi^{\texttt{k}} + \frac{\epsilon}{|\Xi'|} \le  Z(x^{\texttt{k}}, \xi)$} 
      \State Add $\theta_\xi^{\texttt{k}} \ge (\pi^{\texttt{k}})^T(h(x)+T(x)\xi)$ to ($M$), i.e., $\Pi_\xi \gets \Pi_\xi \cup \{\pi^\texttt{k}\}$
      \State \texttt{cut\_added} $\gets \texttt{True}$
      \EndIf
      \EndFor
      \For{$r \in \mc R'$}
      \State Solve for $U(x^{\texttt{k}}, r)$; $\pi^\texttt{k} \gets$ its optimal dual solution
      \If{$\lambda^{\texttt{k}} + \epsilon_f \le  U(x^{\texttt{k}}, r)$} 
      \State Add $\lambda \ge (\pi^{\texttt{k}})^TT(x)r$ to ($M$), i.e., $\Pi_r \gets \Pi_r \cup \{\pi^\texttt{k}\}$
      \State \texttt{cut\_added} $\gets \texttt{True}$
      \EndIf
      \EndFor
      \If{\texttt{cut\_added} $=$ \texttt{False}}
      \For{$i \in [N]$}
      \State Solve for $g(x^{\texttt{k}}, \lambda^{\texttt{k}})$
      \If{$g(x^{\texttt{k}}, \lambda^{\texttt{k}})$ has a finite optimum}
      \State $g_i^{\texttt{k}}\gets$ its optimal objective value; $(\pi^{\texttt{k}}_i, \xi^{\texttt{k}}_i) \gets$ its optimal solution
      % \State $\xi^{\texttt{k}}_i \gets B_i z^{\texttt{k}}_i$ 
      \If{$t_i^{\texttt{k}} + \frac{\epsilon}{N} \le g_i^{\texttt{k}}$}
      % \State Solve for $Z(x^{\texttt{k}}, \xi^{\texttt{k}}_i)$; $\hat \pi_i^\texttt{k} \gets$ its optimal dual solution
      \State Add to (M) a variable $\theta_{\xi^\texttt{k}}$ and the following constraints:
      \begin{align*}
      &t_j \ge \theta_{\xi^{\texttt{k}}_i} - \lambda \|\xi^\texttt{k}_i - \zeta^j\|, \forall j \in [N],\\
      &\theta_{\xi^{\texttt{k}}_i} \ge ( \pi^\texttt{k}_i)(h(x)+T(x)\xi^\texttt{k}_i)\end{align*}
      \State i.e., $\Xi' \gets \Xi' \cup \{\xi^{\texttt{k}}_i\}$; $\Pi_{\xi_i^{\texttt{k}}} \gets \Pi_{\xi_i^{\texttt{k}}} \cup \{ \pi^\texttt{k}_i\}$
            \State \texttt{cut\_added} $\gets \texttt{True}$
      \EndIf
      \ElsIf{$g(x^{\texttt{k}}, \lambda^{\texttt{k}})$ is unbounded}
      \State $g_i^{\texttt{k}} \gets \infty$; $r^{\texttt k}_i \gets$ its normalized unbd. ray of $\Xi$ such that $\|r^{\texttt k}_i\|_p = 1$; 
      \State Solve for $U(x^\texttt{k}, r^\texttt{k}_i)$
      \If{$\lambda^{\texttt{k}} +\epsilon_f\le U(x^\texttt{k}, r^\texttt{k}_i)$}
      \State $\pi^{\texttt{k}}_i \gets$ the optimal solution of $U(x^\texttt{k}, r^\texttt{k}_i)$
      \State Add to (M) a constraint $\lambda \ge  (\pi^{\texttt{k}}_i)^T T(x)r^{\texttt k}_i$,
      \State i.e., $\mc R' \gets \mc R' \cup \{r^{\texttt k}_i\}$ and $\Pi_{r_i^{\texttt{k}}} \gets \Pi_{r_i^{\texttt{k}}} \cup \{ \pi^\texttt{k}_i\}$
            \State \texttt{cut\_added} $\gets \texttt{True}$
      % \Else 
      % \State $g_i^{\texttt{k}} \gets t_i^\texttt{k}$
       \EndIf
      \EndIf
      \EndFor
      \State $\texttt{UB} \gets \min\{\texttt{UB}, c^T x^\texttt{k} + {\underline\epsilon} \lambda^\texttt{k} + \frac{1}{N}\sum_{i\in[N]} g_i^\texttt{k}\}$
      \If{\texttt{cut\_added} $=$ \texttt{False}}
      $\texttt{UB} \gets v^\texttt{k}$
      \EndIf
      \EndIf
      \State $\texttt{LB} \gets v^\texttt{k}$; $\texttt{k}\gets \texttt{k+1}$
    \EndWhile
  \end{algorithmic}
\end{algorithm}

The algorithmic procedure is outlined in Algorithm \ref{algo}. To ensure the algorithm's legitimacy and guarantee its finite convergence, two essential conditions must be satisfied: (i) (M) achieves an optimal solution at each iteration $\texttt{k}$, and (ii) the iterates are bounded. These conditions are formally established in the following lemma:
\begin{lemm}
At every iteration, (M) attains an optimal solution, and the iterates $( x^{\texttt k}, \lambda^{\texttt k}, t^{\texttt k})$ remain bounded.
\label{lemm:master-attainability}
\end{lemm}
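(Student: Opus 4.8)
The plan is to prove the two assertions of Lemma~\ref{lemm:master-attainability} separately, since attainability of an optimum for $(M)$ at each iteration follows easily once boundedness of the feasible region (or at least of the relevant level set) is established, and boundedness of the iterates is really the crux.

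First I would argue boundedness of the $x$-component. Since $(M)$ is a relaxation of \eqref{prob:orig-master}, its feasible region projects onto $\mathcal X$ in the $x$-coordinates, and Assumption~\ref{assu:compactX} gives that $\mathcal X$ is compact; hence $x^{\texttt k}$ stays in a fixed compact set for every $\texttt k$. Next, boundedness of $\lambda^{\texttt k}$: the relaxation $(M)$ always contains at least the constraints \eqref{eq:orig-master:epi} and \eqref{eq:orig-master:dual} indexed over $\Xi' \supseteq \{\zeta^i\}_{i=1}^N$ together with the dual cuts already accumulated in $\Pi_\xi$; in particular, since $\lambda \ge 0$ and $\|\zeta^i - \zeta^i\| = 0$, the constraints for $\xi = \zeta^i$ read $t_i \ge \theta_{\zeta^i} \ge \pi^T(h(x)+T(x)\zeta^i)$ for each accumulated $\pi$. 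The key point is that the \emph{objective} of $(M)$ contains the term $\underline\epsilon\,\lambda$ with $\underline\epsilon > 0$, so any feasible point with large $\lambda$ has large objective; thus on the optimal-value level set (which is what the solver returns) $\lambda^{\texttt k}$ is bounded by $(\,\overline v - c^T x - \frac1N\sum_i t_i\,)/\underline\epsilon$ for an a-priori upper bound $\overline v$ on the optimal value of $(M)$. To turn this into an honest bound I would exhibit a fixed feasible point of $(M)$ whose objective bounds all the $v^{\texttt k}$ from above — e.g.\ take any $x_0 \in \mathcal X$, set $\theta_{\zeta^i} = Z(x_0,\zeta^i)$, $t_i = Z(x_0,\zeta^i)$, and $\lambda$ large enough to satisfy whatever finitely many cuts of type \eqref{eq:orig-master:lambda_bounds} are currently present (possible since there are finitely many and, by Lemma~\ref{lemm:Lipschitz}, $\sup_{\pi\in\Pi}\pi^T T(x_0) r$ is finite); monotonicity of the relaxation and compactness of $\mathcal X$ make this bound uniform in $\texttt k$. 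Combined with the lower bound $t_i \ge \theta_{\zeta^i} \ge \pi^T(h(x)+T(x)\zeta^i) \ge -\sup_{x\in\mathcal X}|\pi^T(h(x)+T(x)\zeta^i)|$ coming from any accumulated $\pi$ and from $\lambda \ge 0$ (so that $t_i \ge \theta_{\zeta^i} - \lambda\|\cdot\|$ never forces $t_i$ down once we also control $\theta$), one gets two-sided bounds on $\lambda^{\texttt k}$ and on $t_i^{\texttt k}$. Boundedness of $\theta^{\texttt k}_\xi$ on the optimal set follows similarly once $x^{\texttt k}$, $\lambda^{\texttt k}$, $t^{\texttt k}$ are controlled, using \eqref{eq:orig-master:dual} from below and \eqref{eq:orig-master:epi} from above.

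With boundedness in hand, attainability is routine: $(M)$ is a conic-LP (mixed-integer conic-LP) whose continuous relaxation in each fixed integer sector has a feasible region that is closed (intersection of the closed cones $\mathcal K_x$, $\mathcal K_y$-type cones and finitely many affine halfspaces) and on which the objective is bounded below on the relevant level set; restricting to the compact level set $\{$objective $\le v^{\texttt k}+1\}$ intersected with the feasible region gives a compact set over which the continuous linear objective attains its minimum, and taking the best over the finitely many integer sectors (finite because $\mathcal X$ is compact, so only finitely many integer points $x_i$, $i\in\mathcal I$, are feasible) yields an attained optimum for $(M)$. I would phrase this via the standard fact that a linear function bounded below on a nonempty closed set that is a Cartesian product of proper cones intersected with affine constraints attains its infimum provided a level set is bounded.

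The main obstacle I anticipate is making the uniform (in $\texttt k$) bound on $\lambda^{\texttt k}$ fully rigorous: one must be careful that the cuts of type \eqref{eq:orig-master:lambda_bounds} added across iterations do not push the \emph{minimum feasible} $\lambda$ to infinity. This is exactly where Assumption~\ref{assum:recourse} enters — complete recourse makes $\Pi$ compact, so $\sup_{\pi\in\Pi}\pi^T T(x) r$ is finite for every $x$ and every normalized ray $r$, and Assumption~\ref{assu:compactX} makes the sup over $x\in\mathcal X$ finite as well; hence even the \emph{full} constraint system \eqref{eq:orig-master:lambda_bounds} admits a finite feasible $\lambda$, uniformly over $x\in\mathcal X$, and a fortiori so does every relaxation appearing in the algorithm. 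I would state this uniform bound as a short preliminary claim (finiteness of $\overline\lambda := \sup_{x\in\mathcal X}\sup_{\pi\in\Pi}\sup_{r\in\mathcal R}\pi^T T(x) r$, or rather that there exists a finite $\lambda$ feasible for the full system given any $x\in\mathcal X$), prove it from the two compactness assumptions, and then feed it into the level-set argument above.
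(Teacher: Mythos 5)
Your strategy is sound and genuinely different in mechanics from the paper's proof. The paper eliminates $(t,\theta)$, and uses H\"older's inequality to show that once $\lambda \ge \overline L := \max_{x \in \mathcal X}\max_{\pi \in \Pi}\|\pi\|_q\|T(x)\|_p$ (well defined by Assumption~\ref{assum:recourse} and Assumption~\ref{assu:compactX}) each inner maximum over $\Xi'$ is attained at the sample point $\zeta^i$, so its value no longer decreases as $\lambda$ grows; since the objective carries $\underline\epsilon\lambda$ with $\underline\epsilon>0$ and any $\lambda \ge \overline L$ satisfies the accumulated constraints \eqref{eq:orig-master:lambda_bounds}, every optimal $\lambda$ lies in $[0,\overline L]$, and ($M$) becomes a convex piecewise-linear minimization over a compact subset of $\mathcal X\times[0,\overline L]$, giving attainment by Weierstrass; the $t$-bounds then follow from the same inequality chain. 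You instead sandwich the optimal value: a feasible point valid at every iteration gives a uniform upper bound $\overline v$ on $v^{\texttt k}$, the $\xi=\zeta^i$ constraints (where the norm term vanishes) give a $\lambda$-free lower bound on $t_i$, uniform once you take the minimum over the compact set $\Pi\times\mathcal X$, and dividing by $\underline\epsilon$ bounds $\lambda^{\texttt k}$. That route can be made to work with the same ingredients (compactness of $\mathcal X$ and $\Pi$, the strictly positive coefficient $\underline\epsilon$, and the Lipschitz bound of Lemma~\ref{lemm:Lipschitz}).

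There is, however, one concrete hole: your uniform feasible point is not shown to be feasible. You assign $\theta_{\zeta^i}=t_i=Z(x_0,\zeta^i)$ and choose $\lambda$ only ``large enough for the accumulated cuts \eqref{eq:orig-master:lambda_bounds}.'' But $\Xi'$ also accumulates non-sample scenarios $\xi$, for which \eqref{eq:orig-master:dual} forces $\theta_\xi \ge \pi^T(h(x_0)+T(x_0)\xi)$ and \eqref{eq:orig-master:epi} then requires $Z(x_0,\zeta^i) \ge Z(x_0,\xi)-\lambda\|\xi-\zeta^i\|$, i.e.\ $\lambda \ge L(x_0)$, the Lipschitz constant from Lemma~\ref{lemm:Lipschitz}. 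Your proposed uniform quantity $\overline\lambda=\sup_{x\in\mathcal X}\sup_{\pi\in\Pi}\sup_{r\in\mathcal R}\pi^TT(x)r$ does not cover this: $\mathcal R$ contains only normalized recession directions of $\Xi$ (it is empty when $\Xi$ is bounded), whereas the difference quotients above involve arbitrary chords $\xi-\zeta^i$. The fix is precisely the paper's constant $\overline L$: taking $\lambda=\overline L$, $\theta_\xi=Z(x_0,\xi)$ for all $\xi\in\Xi'$, and $t_i=Z(x_0,\zeta^i)$ yields a point feasible for the full system \eqref{prob:orig-master}, hence for every relaxation ($M$), with objective $c^Tx_0+\underline\epsilon\,\overline L+v_{SAA}(x_0)$ independent of $\texttt k$. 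Two smaller remarks: your lower bound on $t_i$ (and with it the level-set argument) presupposes that at least one dual point has been accumulated in $\Pi_{\zeta^i}$ for each $i$ — a wrinkle the paper's proof shares — and in the attainment step the level set is not compact in the $\theta$-coordinates (they are bounded above by \eqref{eq:orig-master:epi} but not below when $\Pi_\xi$ is small); since $\theta$ does not enter the objective this is routine to handle, but the paper's elimination of $(t,\theta)$ sidesteps it more cleanly.
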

\begin{proof}
Note that ($M$) can be expressed as follows: %\ref{algo:finite-convergence} 
% \begin{subequations}
\begin{align}
  \min_{x \in \mc X, \lambda \ge 0} \ & c^T x + {\underline\epsilon} \lambda + \frac{1}{N}\sum_{i \in [N]} \max_{\xi \in \Xi'} \max_{\pi \in \Pi_\xi}(h(x) + T(x)\xi)^T \pi - \lambda \|\xi - \zeta^i\|\\
  &\lambda \ge \max_{\pi \in \Pi_r, r \in \mc R'}\pi^TT(x)r.\label{eq:iter-master:lambda_bounds}
  \end{align}
% \end{subequations}
Let $\overline L:=\max_{x \in \mc X} L(x)$, where $L(x) = \max_{\pi \in \Pi}\|\pi\|_q \|T(x)\|_p$ and $q$ is chosen such that $1/p+1/q = 1$, as defined in the proof of Lemma \ref{lemm:Lipschitz}. Since $\mc X$ is compact and $L(x)$ is continuous in $x$, $\overline L$ is well defined. From the compactness of $\mc X$, $\{x^{\texttt{k}}\}$ is bounded.
We claim that $\lambda$ is bounded from above by the constant $\overline L$. We first show that, for each $i \in [N]$, $\zeta^i \in \Xi'$ is optimal to the inner maximization problem when $\lambda \ge \overline L$:
\begin{subequations}
    \begin{align}
  &\max_{\xi \in \Xi'} \max_{\pi \in \Pi_\xi}(h(x) + T(x)\xi)^T \pi - \lambda \|\xi - \zeta^i\|\\
  % &\le\max_{\xi \in \Xi'} \max_{\pi \in \Pi}f(x,\xi)^T \pi - \lambda \|\xi - \zeta^i\|\\
  &= \max_{\xi \in \Xi'} \max_{\pi \in \Pi_\xi}(T(x)\xi - T(x)\zeta^i)^T \pi - \lambda \|\xi - \zeta^i\| + (h(x) + T(x)\zeta^i)^T \pi\\
  & \le \max_{\xi \in \Xi'} \max_{\pi \in \Pi_\xi} \|\pi\|_q \|T(x)\|_p\|\xi - \zeta^i\|_p  - \lambda \|\xi - \zeta^i\| +(h(x) + T(x)\zeta^i)^T \pi \label{eq:holder-matrix}\\
  &\le \max_{\xi \in \Xi'} \max_{\pi \in \Pi_\xi}\left(\overline L - \lambda\right)\|\xi-\zeta^{i}\| +(h(x) + T(x)\zeta^i)^T \pi \\
  &= \max_{\pi \in \Pi_\xi} (h(x) + T(x)\zeta^i)^T \pi \mbox{ if } \lambda \ge \overline L,\label{eq:switch-max}
\end{align}\label{eqs:inner-max-bound}
\end{subequations}
% \begin{align*}
%    &\max_{l'=1,\cdots,l} \left\{(h(x) + T(x)\xi^{l'}_i)^T \pi^{l'}_i - \lambda \|\xi^{l'}_i - \zeta^i\|\right\}\\
%    &\le \max_{l'=1,\cdots,l} \left\{\left|(h(x) + T(x)\xi^{l'}_i)^T \pi^{l'}_i - (h(x) + T(x)\zeta^{i})^T \pi^{l'}_i\right|  - \lambda \|\xi^{l'}_i - \zeta^i\| + (h(x) + T(x)\zeta^{i})^T \pi^{l'}_i \right\}\\
%    &\le \max_{l'=1,\cdots,l} \left\{\left(\|\pi^{l'}_i\|\|T(x)\| - \lambda\right)\|\xi^{l'}_i-\zeta^{i}\| + (h(x) + T(x)\zeta^{i})^T \pi^{l'}_i \right\}\\
%    &\le \max_{l'=1,\cdots,l} \left(\|\pi^{l'}_i\|\|T(x)\| - \lambda\right)\|\xi^{l'}_i-\zeta^{i}\| + \max_{l'=1,\cdots,l}(h(x) + T(x)\zeta^{i})^T \pi^{l'}_i\\
%    &= \max_{l'=1,\cdots,l}(h(x) + T(x)\zeta^{i})^T \pi^{l'}_i,
% \end{align*}
where \eqref{eq:holder-matrix} follows from Hölder's inequality and using a
matrix norm induced by $l_p$-norm, and \eqref{eq:switch-max} can be obtained by interchanging the maximization over $\Pi_\xi$ and $\Xi'$. Note that $\max_{\pi \in \Pi_\xi}(h(x) + T(x)\zeta^i)^T \pi$ is also a lower bound of the inner maximization problem. %, as $\zeta^i \in \Xi'$. 
Therefore, its optimal objective value is $\max_{\pi \in \Pi_\xi}(h(x) + T(x)\zeta^i)^T \pi$ for any choice of $\lambda \ge \overline L$. In addition, any $\lambda \ge \overline L$ is feasible to \eqref{eq:iter-master:lambda_bounds} since $\overline L \ge \max_{\pi \in \Pi} \|\pi\|_q \|T(x)\| \ge \max_{\pi \in \Pi, r:\|r\|= 1} \pi^T T(x)r$. However, because of the objective term ${\underline\epsilon} \lambda$ with ${\underline\epsilon} > 0$, $\lambda > \overline L$ will always be suboptimal, and thus $\{\lambda^\texttt{k}\}$ is bounded from above by $\overline L$.

Therefore, ($M$) is a minimization of a convex piecewise linear function of $x, \lambda$ over a compact region contained in $\mathcal X \times [0,\overline L]$, so ($M$) attains an optimal solution from the Weierstrass theorem. 

The boundedness of $t_i$ follows additionally. Note first that since $t_i \ge (h(x)+T(x)\zeta^i)^T\pi$ for any $\pi \in \Pi_{\zeta^i}$, its optimal value is bounded below by $\min_{x \in \mc X}(h(x)+T(x)\zeta^i)^T \pi$ for some $\pi \in \Pi_{\zeta^i}$, which is well-defined by the Weierstrass theorem. In addition, for any $x \in \mc X$ and $\lambda \in [0,\overline L]$, we have from \eqref{eqs:inner-max-bound} that
\begin{align*}
c^T x + \underline \epsilon \lambda + \frac{1}{N}\sum_{i \in [N]}t_i & \le c^T x + \underline \epsilon \overline L + \frac{1}{N}\sum_{i \in [N]}\max_{\pi \in \Pi_\xi}(h(x)+T(x)\zeta^i)^T \pi \\
& \le c^T x + \underline \epsilon \overline L + \frac{1}{N}\sum_{i \in [N]}Z(x,\zeta^i),
\end{align*}
which implies that $\frac{1}{N}\sum_{i \in [N]}t_i$ is bounded from above by $\frac{1}{N}\sum_{i \in [N]} \max_{x \in \mathcal X} Z(x,\zeta^i)$, which is well-defined by the Weierstrass theorem. Therefore $\{t^\texttt k\}$ is bounded.
\end{proof}

Note that the execution of the algorithm can be divided into two distinct phases: one set of iterations where Lines 16-33 are not executed, and the other set where they are executed. The latter set of iterations occurs when \eqref{prob:orig-master} for some finite sets $\Xi'$ and $\mathcal R'$ is solved to $\epsilon$-optimality and $\epsilon_f$-feasibility, and the algorithm seeks a new scenario to be added to $\Xi'$ or $\mc R'$. We label the latter set as $\mc I$, which is an ordered subset of $\{1, 2, \cdots\}$.

\begin{theo}
  For any $\epsilon >0$, Algorithm \ref{algo} 
  stops in a finite number of iterations.
  \label{theo:convergence}
\end{theo}
% \textbf{Proof.} Suppose that the algorithm does not terminate in a finite number of steps. 
\begin{proof} 
We prove first that the number of iterations between two consecutive indices in $\mc I$ is finite and then the finiteness of $\mathcal I$, from which the desired result follows. For each pair of consecutive indices $i_1$ and $i_2$ in $\mathcal{I}$, the algorithm's primary objective during iterations $i_1+1$ through $i_2-1$ is to solve the following optimization problem:
\begin{align*}\min_{x \in \mc X, \lambda \ge 0, t} \ & c^T x + {\underline\epsilon} \lambda + \frac{1}{N}\sum_{i \in [N]} t_i \\ 
\mbox{s.t.} \ & t_i \ge \max_{\pi \in \Pi} \pi^T(h(x)+T(x)\xi) - \lambda \|\xi - \zeta^i\|, %:\eqref{eq:lambda_bounds}\},
 \ \forall \xi \in \Xi',\\
 & \lambda \ge \max_{\pi \in \Pi} \pi^TT(x)r, \ \forall r \in \mathcal R',
\end{align*}
for some finite $\Xi'$ and $\mathcal R'$. Therefore, it can be expressed as the following semi-infinite programming problem:
\begin{subequations}
    \begin{align}
\min_{x \in \mc X,\lambda \ge 0,t} \ & \ell(x,\lambda,t)\\
\mbox{s.t.} \ & (x,\lambda,t) \in  \bigcap_{i \in [N], \xi \in \Xi'} \mc Z^{i,\xi} \cap \bigcap_{r \in \mc R'} \mc Z^{0,r},
\end{align}\label{prob:finite-iter}
\end{subequations}
where $\ell(x,\lambda,t) := c^T x + {\underline\epsilon} \lambda + \frac{1}{N}\sum_{i \in [N]} t_i$, $\mc Z^{i,\xi} := \{(x,\lambda,t) : z_{i,\xi}(x, \lambda, t, \pi) := \pi^T(h(x) + T(x)\xi) - \lambda \|\xi - \zeta^i\| - t_i \le 0, \ \forall \pi \in \Pi\}$ and $\mc Z^{0,r} := \{(x,\lambda,t) : z_{0,r}(x, \lambda, \pi) := \pi^TT(x)r -\lambda \le 0, \ \forall \pi \in \Pi\}$. 

During these iterations $i_1+1$ through $i_2+1$, the algorithm can be considered as the cutting plane method presented in \cite{hettich1993semi} for solving the semi-infinite problem. It follows from the boundedness of the iterates, as established in Lemma \ref{lemm:master-attainability}, together with Theorem 7.2 of \cite{hettich1993semi}, that \eqref{prob:finite-iter} can be solved to an arbitrary tolerance in a finite number of iterations. Consequently, if Lines 16-33 are not executed infinitely often, i.e., $|\mathcal{I}| < \infty$, the algorithm terminates in a finite number of iterations.

Note that, for iterations in $\mc I$, the procedure can be thought of applying the cutting plane method to the following semi-infinite program:
\begin{align}
\min_{x \in \mc X, \lambda \ge 0, t} \ & \ell(x,\lambda,t)\\
\mbox{s.t.} \ & (x,\lambda,t) \in \bigcap_{i \in [N]} \mc W^{i} \cap \mc W^0,
\end{align}
where $\mc W^{i} := \{(x,\lambda,t) : w_{i}(x, \lambda, t, \xi) := Z(x,\xi) - \lambda \|\xi - \zeta^i\| - t_i \le 0, \ \forall \xi \in \Xi\}$ and $\mc W^0 = \{(x,\lambda,t) : w_0(x, \lambda,  r) := U(x,\xi) -\lambda \le 0, \ \forall r \in \mc R\}$. Once again, given the boundedness of the iterates (as established in Lemma \ref{lemm:master-attainability}) and in accordance with Theorem 7.2 of \cite{hettich1993semi}, we can conclude that $|\mathcal{I}| < \infty$, thereby ensuring the algorithm's termination within a finite number of iterations.
\end{proof}
% The desired result follows. \qed

\begin{rema} Note Algorithm \ref{algo} is a generalization of the algorithms proposed in \cite{duque2022distributionally,gamboa2021decomposition} that are designed for TSDRLP with right-hand side uncertainty and $l_1$-norm over a bounded box uncertainty set. In those works, \eqref{subprob} is posed into a MILP problem by exploiting Proposition \ref{prop:worst-case-distribution:boundary:l1} for the bounded $\Xi$. 
% For such cases, \eqref{prob:sub} is equivalent to the separation problem (3.8) in \cite{duque2022distributionally}, where $z_{j0}$ corresponds to $b^0_j$, $z_{j1}$ to $b^+_j$, and $\psi$ to $w$. Note that (3.8d) and (3.8e) are redundant in their separation problem (3.8), since they are implied by the objective function; specifically, $w^+_k = 1 \Leftrightarrow u_k \pi_k + \gamma (\xi^\sigma_k- u_k) \ge \xi^\sigma_k \pi_k \Leftrightarrow \pi_k \le \gamma$. 
\end{rema}
\begin{rema}
The finite convergence proof of Algorithm \ref{algo} is more general than previous results. The proof given in \cite{duque2022distributionally} relies on the finite number of extreme points of $\Pi$, which applies only to TSDRLP; on the other hand, Algorithm \ref{algo} guarantees its convergence for general conic feasible regions. In addition, \cite{luo2019decomposition} considered a general nonlinear case but they assumed both $\lambda$ and $\Xi$ are bounded, a condition that may not be always evident when the assumption is met or true.
\end{rema}
% \begin{rema}
%   For the case without Assumption \ref{assum:recourse}, Algorithm \ref{algo} can be modified so that it adds a feasibility cut when $Z(x,\xi)$ or $g(x,\lambda)$ is unbounded using an associated unbounded ray. 
%   Note that even without Assumption \ref{assum:recourse}, Theorem \ref{theo:worst-case-distribution:boundary} and Proposition \ref{theo:worst-case-distribution:boundary:l1} hold for compact $\Xi$. Therefore, the modified procedure is valid for compact $\Xi$ as long as we have upper bounds of $\pi$'s associated with second-stage constraints subject to uncertainty, which is often a small subset of constraints in practice.
%   % Usually, for unbounded subproblem, restricting the norm of the variables often improve numerical issues. One can do that by solving the subproblem in two stages. Add an auxiliary variable $s$ to $g_i(x,\lambda($. setting $s = 1$ at first and if the problem is unbounded, sett s=0 and add the normalization constraint for the ray. 
%   However, the convergence proof does not extend to this case. 
% \end{rema}

\subsection{Implementation details and enhancement features}\label{sec:algo:implementation}
In this section, we present some important details for the efficient implementation of Algorithm \ref{algo}.
\paragraph{Initial scenario set $\Xi'$} The initial scenario set $\Xi'$ may include additional meaningful scenarios, such as the worst-case scenario, to reduce the number of times the nonconvex subproblem $g_i(x,\lambda)$ is solved. In some cases, the worst-case scenario is easily obtainable; for instance, for the facility location problem with demand uncertainty, the worst-case scenario is where each demand is realized to a maximum value. %Incorporating the worst-case scenario may reduce significantly the computational burden for generating a new scenario via the nonconvex subproblem. 

\paragraph{Benders cuts for $\xi \in \Xi'$ and $r \in \mc R'$} Lines 5-14 of Algorithm \ref{algo} is essentially generating Benders cuts for solving \eqref{prob:finite-iter}. Therefore, instead of adding the vanilla Benders cut in the form of Line 9, one can expedite the solution procedure by employing advanced Benders cuts or procedures, such as the one proposed in \cite{fischetti2010note}.

\paragraph{Solver for nonconvex $g_i(x^\texttt{k},\lambda^{\texttt{k}})$} When $p=1$ or $p=2$, $g_i(x^\texttt{k},\lambda^{\texttt{k}})$ becomes a nonconvex quadratically constrained quadratic programming (QCQP) problem, which can be solved to optimality within some threshold via a commercial solver, such as Gurobi 10. By using the optimality conditions, given in Theorem \ref{theo:worst-case-distribution:boundary} and Proposition \ref{prop:worst-case-distribution:boundary:l1}, one can reformulate the problem equivalently into a mixed-integer linear program \cite{gamboa2021decomposition,duque2022distributionally} or approximately as a mixed-integer convex program (by discretizing the boundary of $\Xi$), however, as commented in \cite{gamboa2021decomposition}, directly solving the nonconvex QCQP using the commercial solver often found more efficient. 

\paragraph{Incrementally solving $g_i( x^\texttt{k},\lambda^\texttt{k})$ with early termination} 
Since the purpose of solving $g_i( x^\texttt{k},\lambda^\texttt{k})$ is to find a violated cut, we can add the following constraint to $g_i( x^\texttt{k},\lambda^\texttt{k})$:
\begin{equation}\mbox{objective function of }g_i( x^\texttt{k}, \lambda^\texttt{k}) - \epsilon / N\ge t_i^\texttt{k}\label{eq:cut-off}\end{equation}
If $g_i( x^\texttt{k},\lambda^\texttt{k})$ terminates with infeasibility status, then it means there is no violated cut for this subproblem within some tolerance of $\epsilon / N$; in addition, any of its feasible solutions, if found, can be used to cut off $(x^\texttt{k},\lambda^\texttt{k})$. 
Therefore, to put more emphasis on the progress of the algorithm, Lines 16-33 of Algorithm \ref{algo} can be modifed as follows using the warm-start feature of solvers:
% \begin{routine}
% \caption{New scenario generation routine}\label{algo:scen-generation}

    \begin{algorithmic}
    \Require Time limit of each subproblem $g_i(x,\lambda)$ (e.g., 10 sec.) 
    \State 
    % \texttt{progressed} $\gets$ \texttt{false}, 
    $S \gets \{1,\cdots, N\}$
    \While{\texttt{!cut\_added}}
    
    \For{$i$ in $S$}    
        \State Solve $g_i(x,\lambda)$; $\texttt{TS}_i$ $\gets$ termination status of $g_i(x,\lambda)$
        \If{$\texttt{TS}_i = \texttt{OPTIMAL}$}
        \State Lines 19-23 of Algorithm \ref{algo}; %$\texttt{progressed} \gets \texttt{true}$
        \ElsIf{$\texttt{TS}_i = \texttt{INFEASIBLE}$} 
            \State $S \gets S \setminus \{i\}$
        \ElsIf{$\texttt{TS}_i = \texttt{UNBOUNDED}$} 
        \State Lines 25-31 of Algorithm \ref{algo}; %$\texttt{progressed} \gets \texttt{true}$
        \ElsIf{$\texttt{TS}_i$ = \texttt{TIME LIMIT}}
        \If{$g(x,\lambda)$ has a solution}
            \State Lines 19-23 of Algorithm \ref{algo}; %$\texttt{progressed} \gets \texttt{true}$
        \EndIf
        \EndIf
        % \If{\texttt{progressed}}
                % \texttt{break}
        % \EndIf
    \EndFor
    \If{$S=\emptyset$} 
    \State $\texttt{UB} \gets v^\texttt{k}$; $\texttt{progressed} \gets \texttt{true}$
    \EndIf 
    \EndWhile
\end{algorithmic}
% \end{routine}
Note that, if $S = \emptyset$, then $\texttt{LB}= \texttt{UB}$ by design, so the algorithm will terminate. It is a valid termination, since by \eqref{eq:cut-off}, it is ensured that $\sum_{i \in [N]}(g_i^{\texttt{k}} - t_i^{\texttt{k}}) < \epsilon$ and thus the true gap is upper bounded by $\epsilon$.
\paragraph{Adding feasibility constraints explicitly for an unbounded box-constrained $\Xi$ when $p=1$}
When $p=1$, one can add \eqref{eq:explicit-dual-constr} to the initial master problem to avoid handling the unbounded $g_i(x,\lambda)$ case.

\section{Numerical results} \label{sec:result}
In this section, we conduct a numerical experiment on a facility location problem involving uncertain demand to verify the implications posited in Remark \ref{rema:extremal-distribution} and to analyze the performance of the proposed algorithm. The facility location problem involving uncertain demand is formulated as: \begin{equation}\min_{x \in \{0,1\}^{I}} f^T x + \mathbb E_{\mathbb P}[Z(x, \tilde\xi)],
\label{prob:FL}
\end{equation}
where \begin{align*}
Z(x,\xi) = \min \ & \sum_{i \in [I]}\sum_{j \in [J]} c_{ij}y_{ij}\\
\mbox{s.t.} \ & \sum_{i \in [I]} y_{ij} \ge \tilde\xi_{j}, \ \forall j \in [J],\\
& \sum_{j \in [J]} y_{ij} \le b_{i}x_i, \ \forall i \in [I],\\
& y_{ij} \ge 0, \ \forall i \in [I], j \in [J]\end{align*}
and $I$ and $J$ respectively denote the number of facilities and customers; $f_i$ and $c_{ij}$ represent the installation cost of facility $i \in [I]$ and the cost of serving customer $j \in [J]$ from facility $i$; $b_i$ is the capacity of facility $i \in [I]$ and $\tilde\xi_j$ is the uncertain demand of customer $j \in [J]$.

% {\color{blue} mention sum(xi\_lb[1:J]) <= data.capacities * x
% }

We use three Holmberg test instances \cite{holmberg1999exact}, \texttt{p1}, \texttt{p21}, \texttt{p43}, each of which has $(I,J) = (10,50)$, $(20,50)$, and $(30, 70)$, respectively. We make some modifications to the problem instances.  First, $c_{ij}$ is scaled by 0.001 since with the original costs the transportation cost outweighs the installation cost and produces a trivial solution of installing all facilities. Second, we relax the binary requirement of $x$ so that TSDRO can fine-tune the here-and-now decision against the worst-case expected second-stage cost, thus showing the difference of $p=1$ and $p=2$ more clearly.

We assume $\tilde\xi$ is gamma distributed over a box uncertainty set, and the true distribution $\mathbb P$ is generated randomly following the procedure presented in \cite{saif2021data}. We sample $N=5$ scenarios from the sampled true distribution for training and 2000 scenarios additionally for testing. For $p=1$, we tested varying values of $\underline\epsilon=a10^b$, where $a=1, 2.5, 5$ and $b=-1, 0, 1, 2$. Additional several $\underline\epsilon$ values are tested when the pattern is ambiguous between consecutive $\underline\epsilon$-values tested. Given that the $l_1$-norm and $l_2$-norm are on different scales, as indicated by $\|\xi\|_1 \le \sqrt{k}\|\xi\|_2$, we normalize $\underline\epsilon$ by $\sqrt{k}$ when considering the case of $p = 2$. For each choice of $p$ and $\underline \epsilon$, we conduct 50 simulations by resampling the training scenarios for constructing the empirical distribution. 

As illustrated in Section \ref{sec:algo:implementation}, we add the worst-case scenario to the initial scenario set $\Xi'$, use the advanced Bender cuts proposed in \cite{fischetti2010note}, utilize Gurobi 10 for solving the nonconvex $g_i(x, \lambda)$, and employ the routine for incrementally solving $g_i(x,\lambda)$ in which $g_i(x, \lambda)$ is solved up to 10 seconds for each attempt. The time limit for the overall algorithm is set as 300 seconds. In addition, we add a valid cut $\sum_{j \in [J]}{l_j} \le \sum_{i \in [I]}b_ix_i$ to (M), where $l_j$ denote the lower bound of $\xi_j$.
% } All experiments are conducted using the Julia programming language and executed on a Mac Laptop equipped with 32 GB of memory and an Intel Core i7 processor running at 2.3 GHz.

Throughout this section, we let $\tilde x_{SAA}$ denote the solution of the SAA version of \eqref{prob:FL}. Additionally, let $\tilde x_{l_1}(\underline\epsilon)$ and $\tilde x_{l_2}(\underline\epsilon)$ represent the solutions of \eqref{prob:FL} corresponding to the varying values of $\underline \epsilon$ for $p=1$ and $p=2$, respectively. Note $\tilde x_{SAA}$, $\tilde x_{l_1}(\underline \epsilon)$, and $\tilde x_{l_2}(\underline \epsilon)$ are random variables contingent upon the random samples used to construct the empirical distribution.

\subsection{Out-of-sample performance}

\begin{figure}[!t]  
  \centering
  \begin{subfigure}[b]{0.4\textwidth}
      \centering      
      \includegraphics[width=\textwidth]{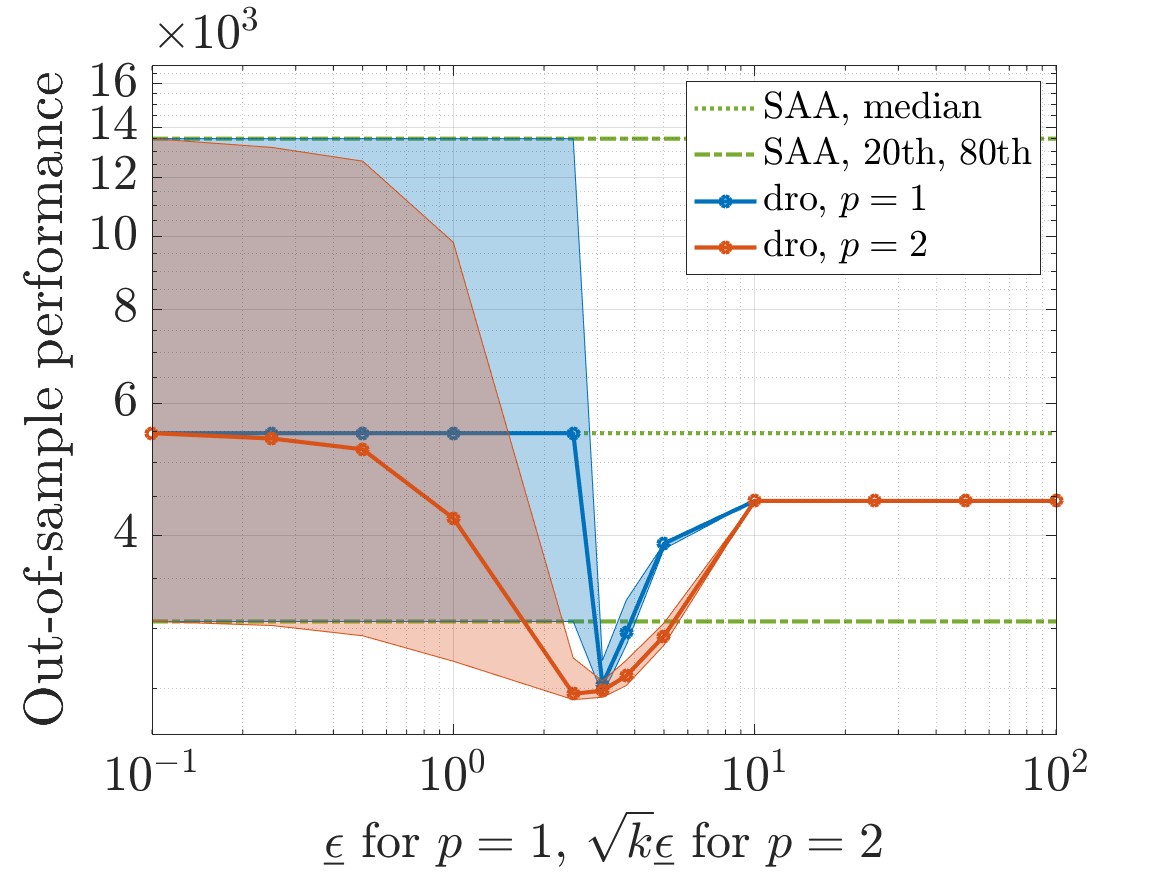}                       
      \caption{\texttt{p1}}\label{fig:oos:p1}
  \end{subfigure}
  \begin{subfigure}[b]{0.4\textwidth}
      \centering      
      \includegraphics[width=\textwidth]{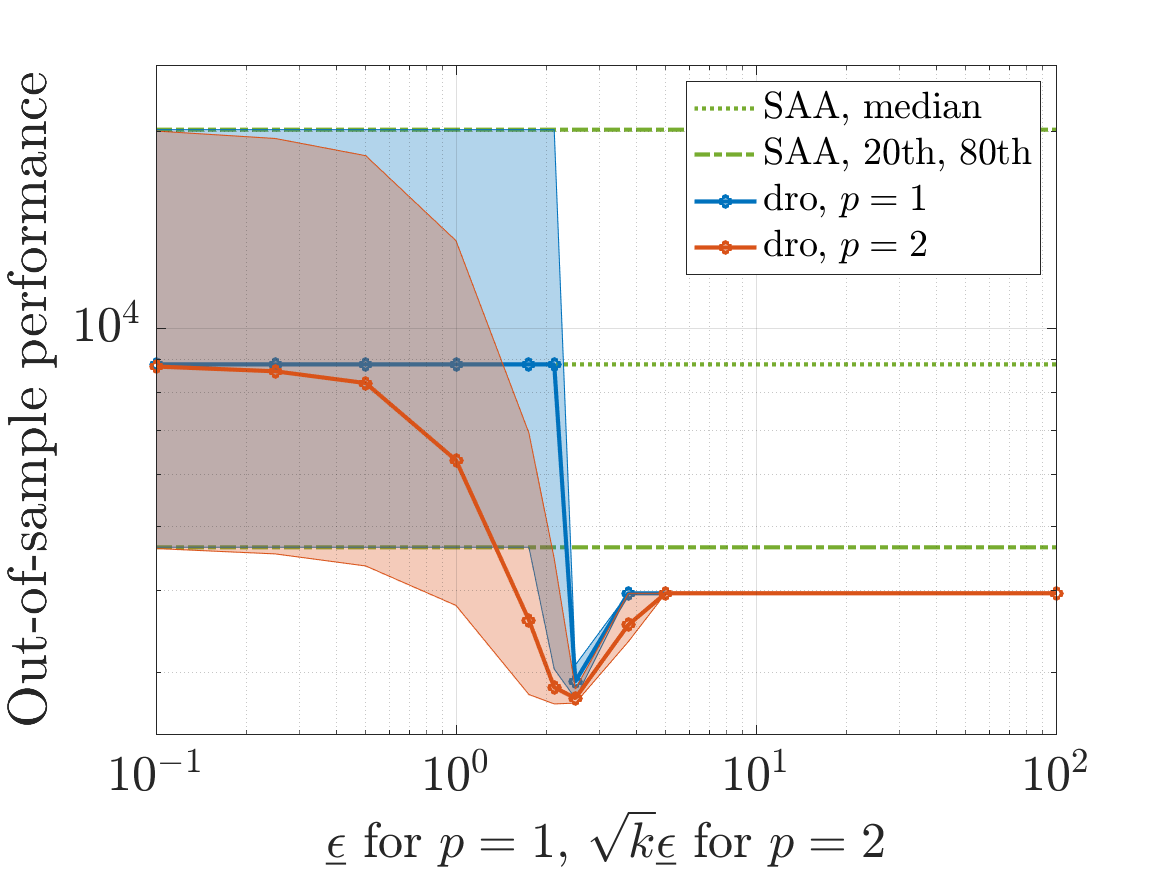}                       
      \caption{\texttt{p21}}\label{fig:oos:p21}
  \end{subfigure} 
  \begin{subfigure}[b]{0.4\textwidth}
      \centering      
      \includegraphics[width=\textwidth]{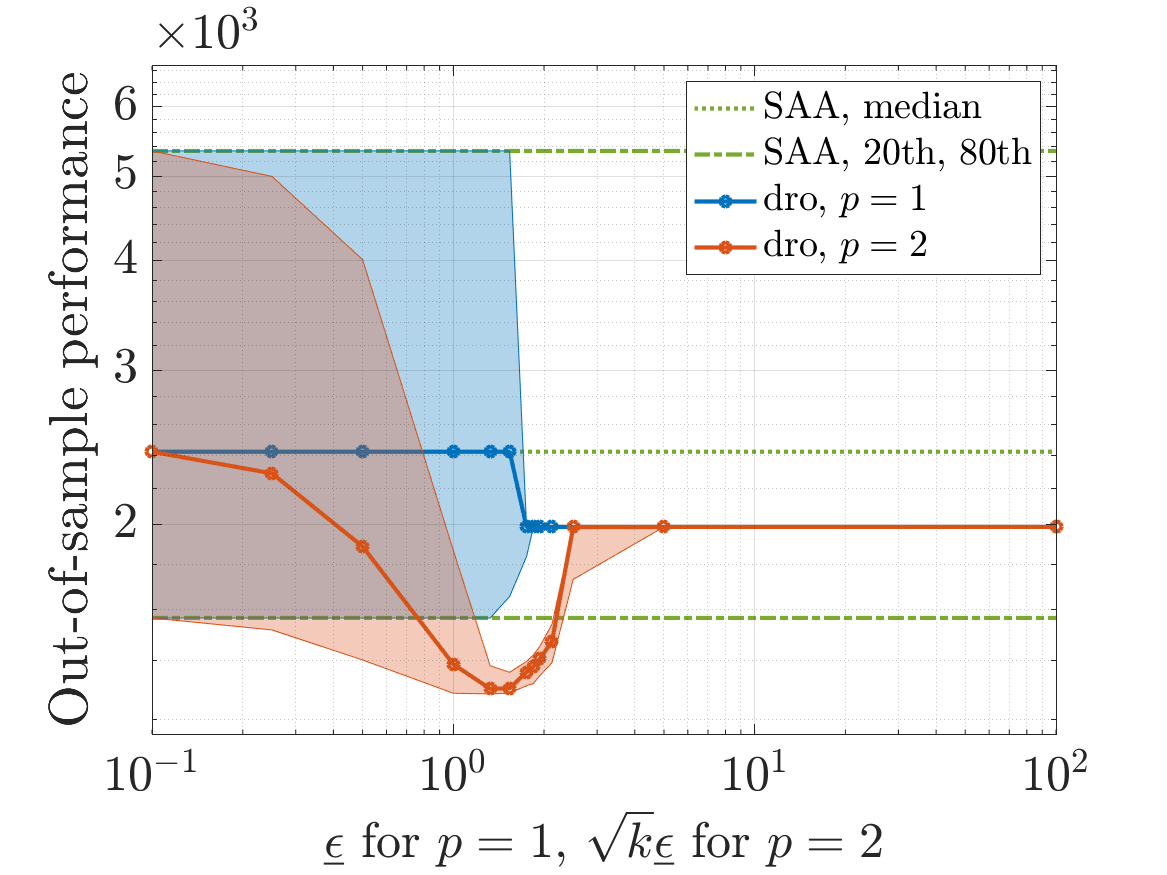}                       
      \caption{\texttt{p43}}\label{fig:oos:p43}
  \end{subfigure} 
  \caption{Estimated $f^T x + \mathbb E [Z(x,\xi)]$ using 2000 testing samples} 
  \label{fig:oos}
\end{figure}

We estimate the out-of-sample performance of a solution $x$ as $f^T x + \mathbb E_{\mathbb P} [Z(x, \tilde\xi)]$, where $\mathbb P$ is the true distribution of $\tilde \xi$, utilizing the testing dataset of 2000 samples. Let $\tilde o_{SAA}$, $\tilde o_{l_1}(\underline \epsilon)$, and $\tilde o_{l_2}(\underline \epsilon)$ represent the respective out-of-sample performances of $\tilde x_{SAA}$, $\tilde x_{l_1}(\underline \epsilon)$, and $\tilde x_{l_2}(\underline \epsilon)$. Figure \ref{fig:oos} illustrates $\tilde o_{SAA}$, $\tilde o_{l_1}(\underline\epsilon)$, and $\tilde o_{l_2}(\underline\epsilon)$ in relation to various selections of Wasserstein radius $\underline \epsilon$. The dotted green line depicts the median of $\tilde o_{SAA}$, while the dash-dotted green lines indicate its 20th and 80th percentiles. Since $x_{SAA}$ is insensitive to distributional uncertainty and independent of $\underline \epsilon$, $\tilde o_{SAA}$ remains constant across different values of $\underline \epsilon$ and shows considerable variability across simulations due to the limited size of the training sample ($N=5$).
On the other hand, $\tilde o_{l_2}(\underline \epsilon)$ is depicted by the red solid line (median) and the shaded red region (20th and 80th percentiles), whereas the blue counterparts represent $\tilde o_{l_1}(\underline \epsilon)$. In contrast to $x_{SAA}$, both decisions based on TSDRO exhibit a trend of enhancing out-of-sample performance as $\underline \epsilon$ increases up to a certain threshold, followed by a deterioration until converging to a robust optimization decision. This pattern underscores the advantage of incorporating distributional uncertainty, especially given a sample of limited size.

Another interesting observation is that the change of $\tilde o_{l_2}(\underline \epsilon)$ with respect to $\underline \epsilon$ appears smoother compared to that of $\tilde o_{l_1}(\underline \epsilon)$ and exhibits a superior median and an empirical 60\% confidence band across a broad range of $\underline \epsilon$ values. Specifically, for \texttt{p1} and \texttt{p21}, $\tilde o_{l_1}(\underline \epsilon)$ displays a sharp decline near an optimal choice of $\bar \epsilon$, with no discernible changes from $\tilde o_{SAA}$ for numerous $\underline \epsilon$ choices, followed by a sudden drop and then a stable yet conservative performance of the robust optimization decision. In contrast, $\tilde o_{l_2}(\underline \epsilon)$ shows smoother variations, encompassing a wider range of $\underline \epsilon$ values that outperform both SAA and robust optimization. \texttt{p43} more clearly demonstrates the superiority of the $l_2$-norm; $\tilde o_{l_1}(\bar \epsilon)$ either matches $\tilde o_{SAA}$ or the out-of-sample performance of robust optimization, whereas $\tilde o_{l_2}(\bar \epsilon)$ achieves better out-of-sample performance than both for many $\underline \epsilon$ choices. Since identifying the optimal $\underline \epsilon$ value in practical applications can be challenging, these findings suggest that the $l_2$-norm may provide greater robustness against suboptimal choices of $\underline\epsilon$. 

\begin{figure}[!t]  
  \centering
  \begin{subfigure}[b]{0.5\textwidth}
      \centering      
      \includegraphics[width=0.8\textwidth]{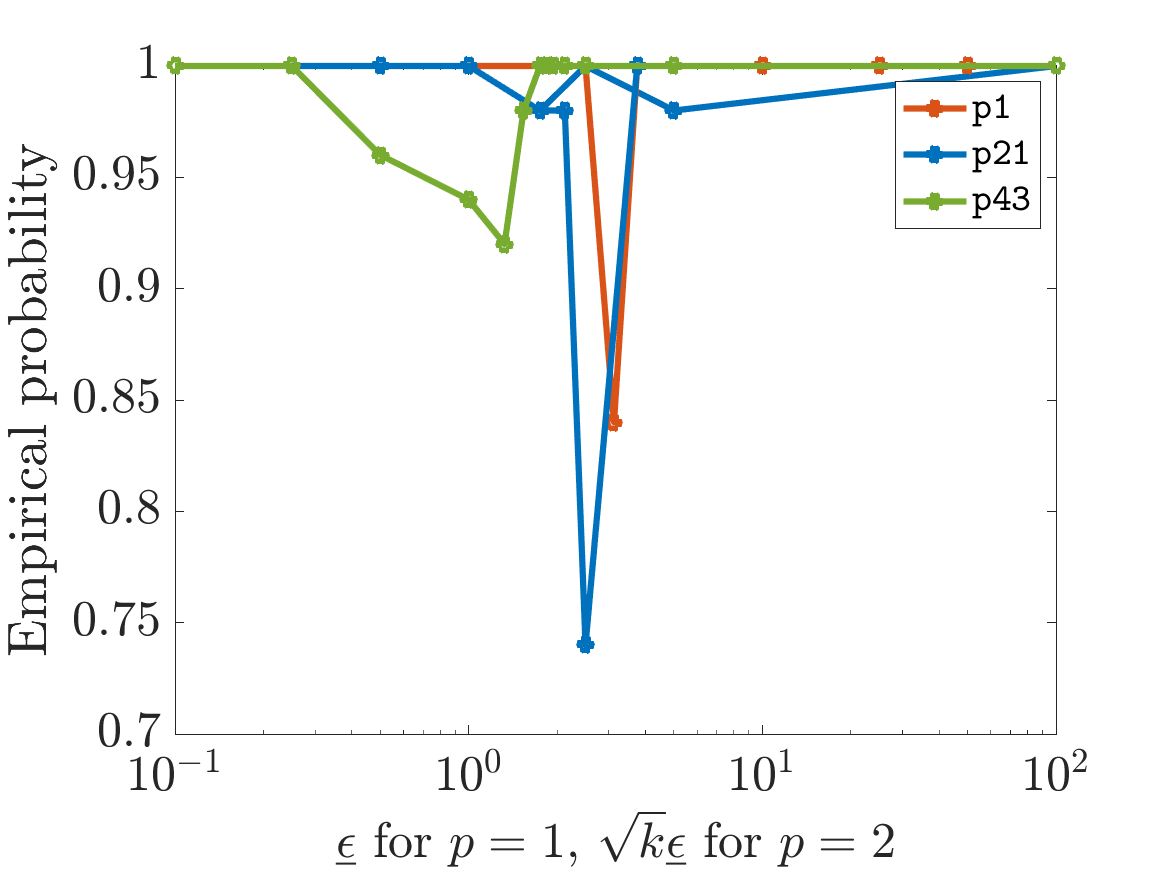}                       
      \caption{Probability that $\tilde o_{l_2}(\underline \epsilon)$ is better than $\tilde o_{l_1}(\underline \epsilon)$ or exhibits a relative difference within 0.1\%}\label{fig:dominance-probability}
  \end{subfigure}
  \hfill
  \begin{subfigure}[b]{0.4\textwidth}
      \centering      
      \includegraphics[width=\textwidth]{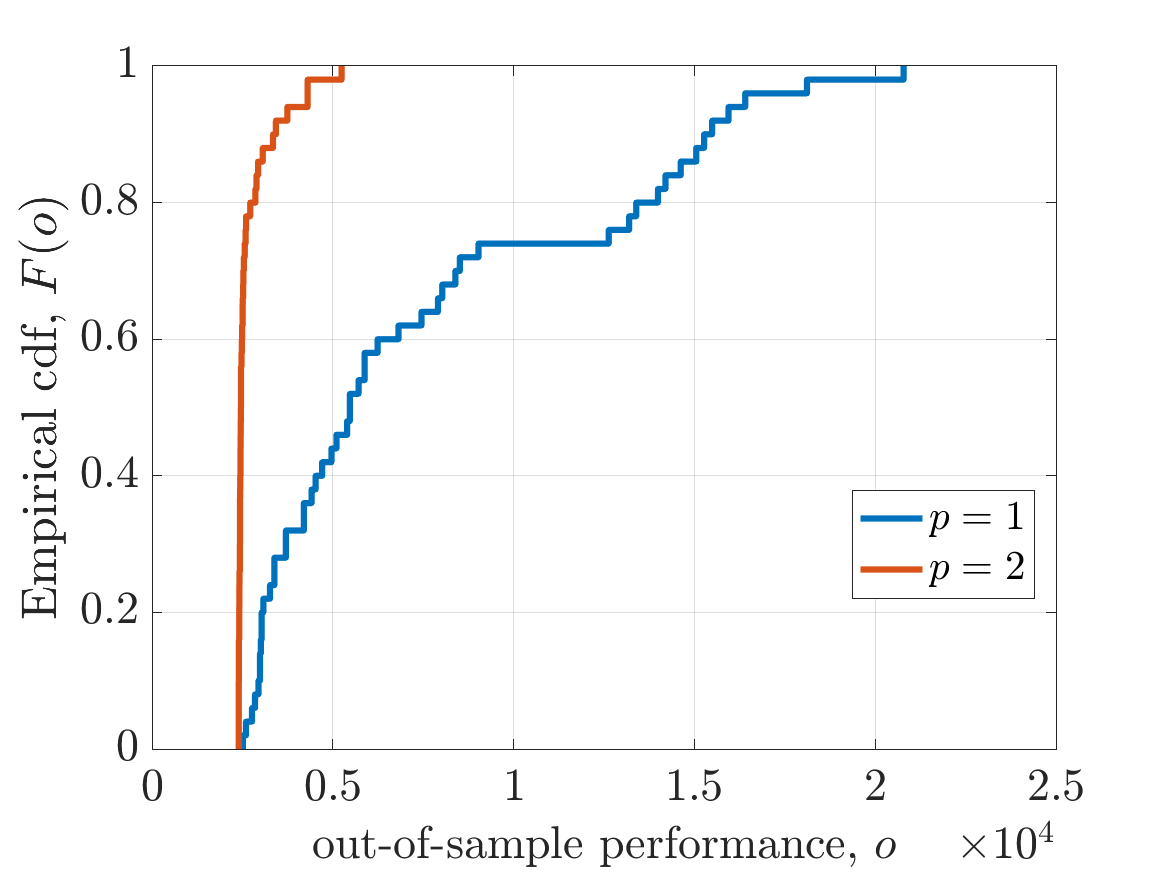}                       
      \caption{Empirical cdf of out-of-sample performance, $\underline \epsilon=3.125$, \texttt{p1}}\label{fig:dominance-first-order}
  \end{subfigure}  
  \caption{Stochastic dominance of $\tilde o_{l_2}(\bar \epsilon)$ to $\tilde o_{l_1}(\bar \epsilon)$} 
  \label{fig:dominance}
\end{figure}
In addition, Figure \ref{fig:dominance-probability} illustrates the empirical probability, obtained from 50 simulations, that $\tilde o_{l_2}(\bar \epsilon)$ is superior to or approximately equal to $\tilde o_{l_1}(\bar \epsilon)$; that is $\mathbb P[\frac{|\tilde o_{l_2}(\underline \epsilon)-\tilde o_{l_1}(\underline \epsilon)|}{|\tilde o_{l_1}(\underline \epsilon)|} \le 0.001 \mbox{ or }\tilde o_{l_2}(\underline \epsilon) \le \tilde o_{l_1}(\underline \epsilon)]$. Notably, across the majority of $\underline \epsilon$ values for all three instances, this probability equals 1. The lowest observed probability is 0.74 for \texttt{p21} when $\underline \epsilon=2.5$. Moreover, as illustrated in \ref{fig:dominance-first-order}, $\tilde o_{l_2}(\underline\epsilon)$ empirically shows first-order stochastic dominance over $\tilde o_{l_1}(\underline\epsilon)$, i.e., $F_{\tilde o_{l_2}}(o) \ge F_{\tilde o_{l_1}}(o) $ for all $o$, where $F_{\tilde o}(o) = \mathbb P[\tilde o \le o]$, for all instances and $\underline \epsilon$ values tested.

\subsection{Extremal distribution}
  As highlighted in Remark \ref{rema:extremal-distribution}, the enhanced performance of TSDRO with an $l_2$-norm may be attributed to its consideration of a significantly larger set of scenarios when evaluating the worst-case distribution. Across all simulations and instances, TSDRO with an $l_1$-norm consistently produced an extremal distribution supported only on sampled points $\zeta^1, \cdots, \zeta^5$, and the worst-case scenario $u$ which represents the upper bound on demands. In contrast, TSDRO with an $l_2$-norm is supported by additional out-of-sample scenarios.

Specifically, Figure \ref{fig:support} illustrates the support of the worst-case distribution obtained for TSDRO with $l_1$- and $l_2$-norm from one simulation of Instance \texttt{p21} when $\underline \epsilon=0.1$. The grey lines represent the lower and upper bounds of each uncertain demand $\tilde \xi_j$, while cross markers indicate the sample points in the training dataset. The `x' marker denotes the worst-case scenario, where each demand $\tilde \xi_j$ is realized at its maximum value $u_j$. When the $l_1$-norm is utilized, the worst-case distribution it hedges against is supported solely on the sample points and the worst-case distribution. In contrast, when the $l_2$-norm is employed, it is supported on the sample points and the worst-case scenario, and additionally on out-of-sample data points denoted by $\xi^1, \cdots, \xi^7$ in Figure \ref{fig:support:p2}. These findings align with Theorem \ref{theo:worst-case-distribution:boundary} and Proposition \ref{prop:worst-case-distribution:boundary:l1}. For $l_1$-norm, all supported scenarios are confined to $\{l_1, \zeta^i_1, u_1\} \times \cdots \times \{l_J, \zeta^i_J, u_J\}$ for $i \in [N]$, while for the $l_2$-norm, the additional supported scenarios lie on the boundary of the support, i.e., $\exists j \in [J]: \xi^i_j = l_j$ or $u_j$ for each generated scenario $i$. 
  \begin{figure}[!t]  
  \centering
  \begin{subfigure}[b]{0.49\textwidth}
      \centering      
      \includegraphics[width=\textwidth]{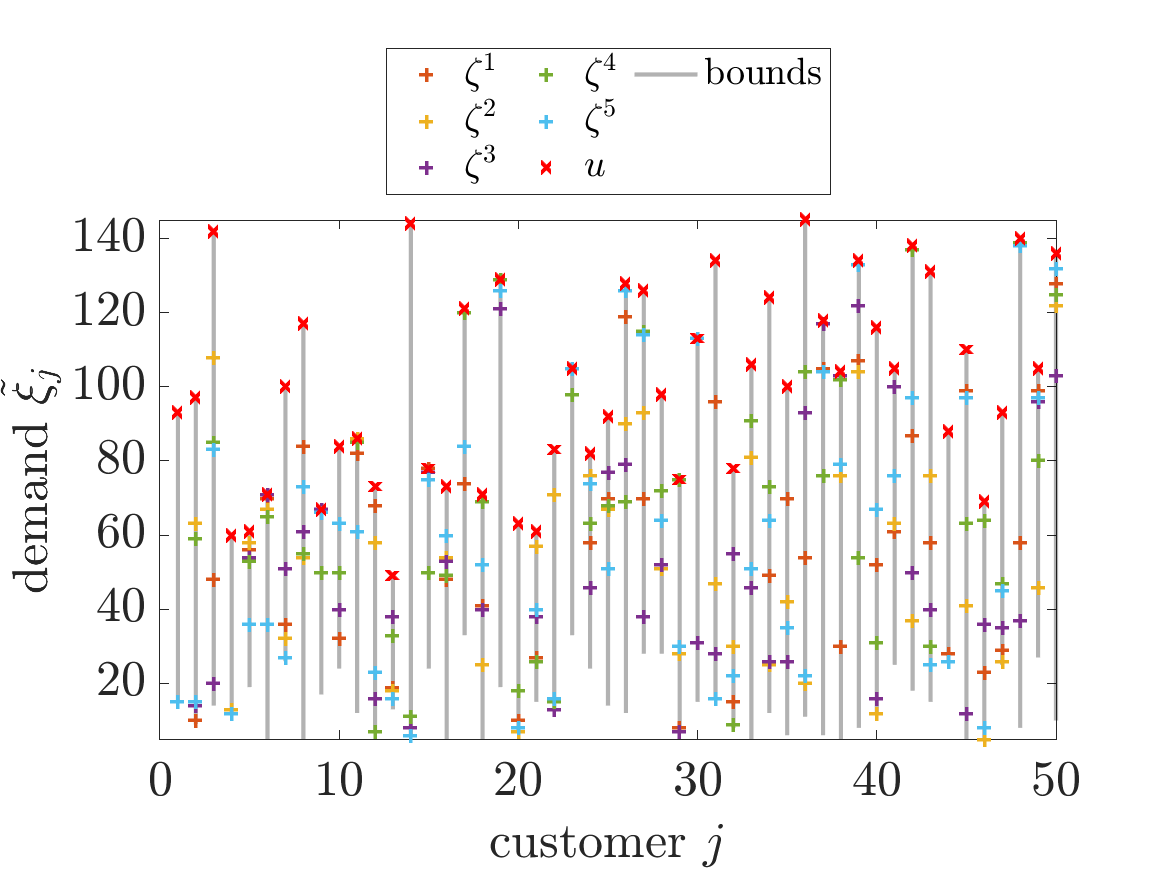}                       
      \caption{Support of the extremal distribution, $l_1$-norm}\label{fig:support:p1}
  \end{subfigure}
  \begin{subfigure}[b]{0.49\textwidth}
      \centering      
      \includegraphics[width=\textwidth]{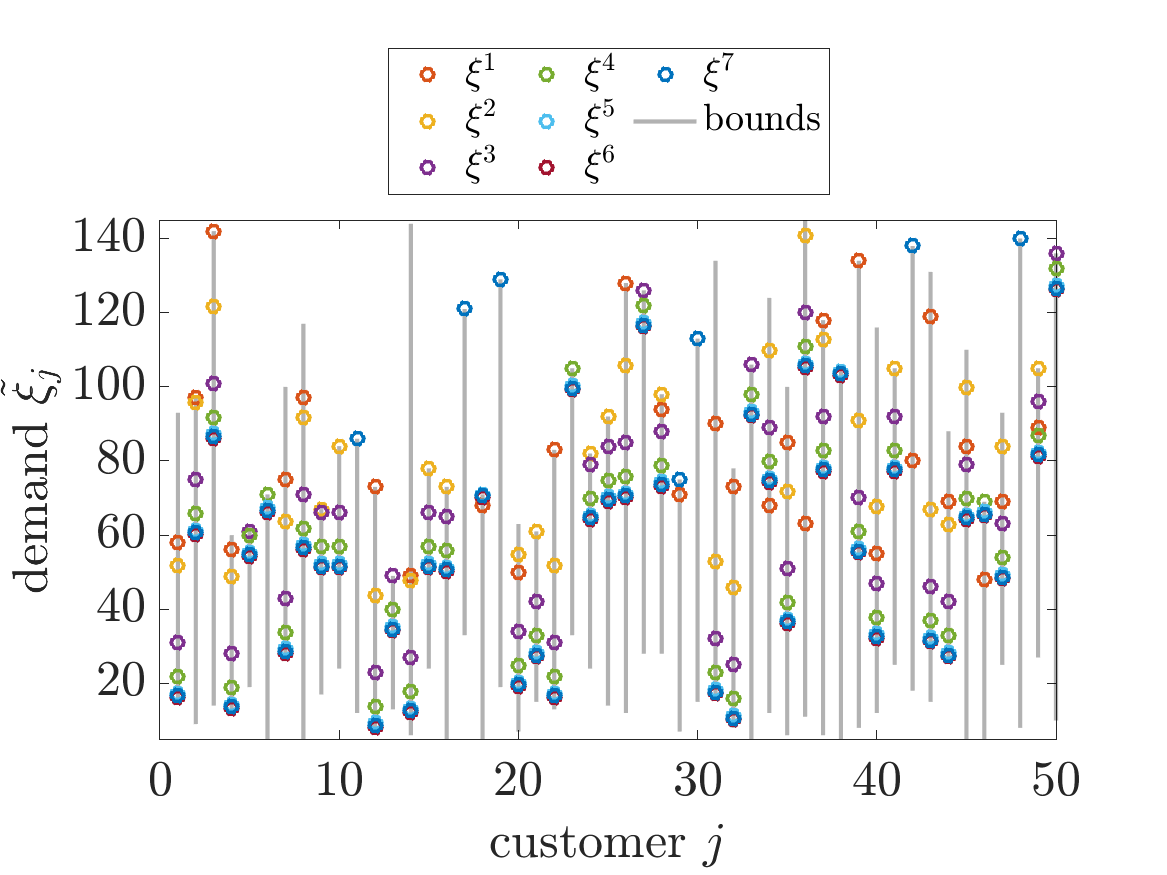}                       
      \caption{Additional scenarios considered, $l_2$-norm}\label{fig:support:p2}
  \end{subfigure} 
  \caption{Support of the extremal distribution, Instance \texttt{p21}} 
  \label{fig:support}
\end{figure}

\subsection{Computational trade-off}

\begin{figure}[!t]  
  \centering
  \begin{subfigure}[b]{0.4\textwidth}
      \centering      
      \includegraphics[width=\textwidth]{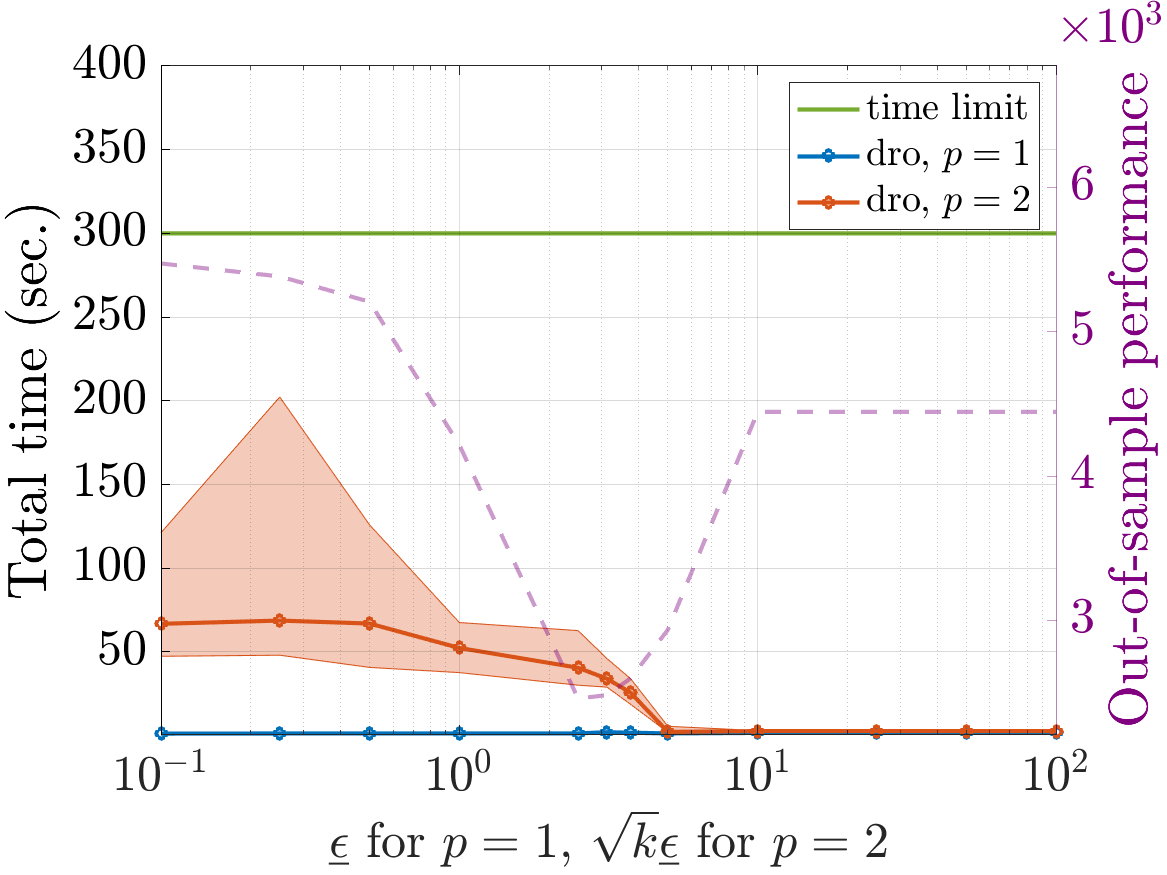}                       
      \caption{\texttt{p1}}\label{fig:time:p1}
  \end{subfigure}
  \begin{subfigure}[b]{0.4\textwidth}
      \centering      
      \includegraphics[width=\textwidth]{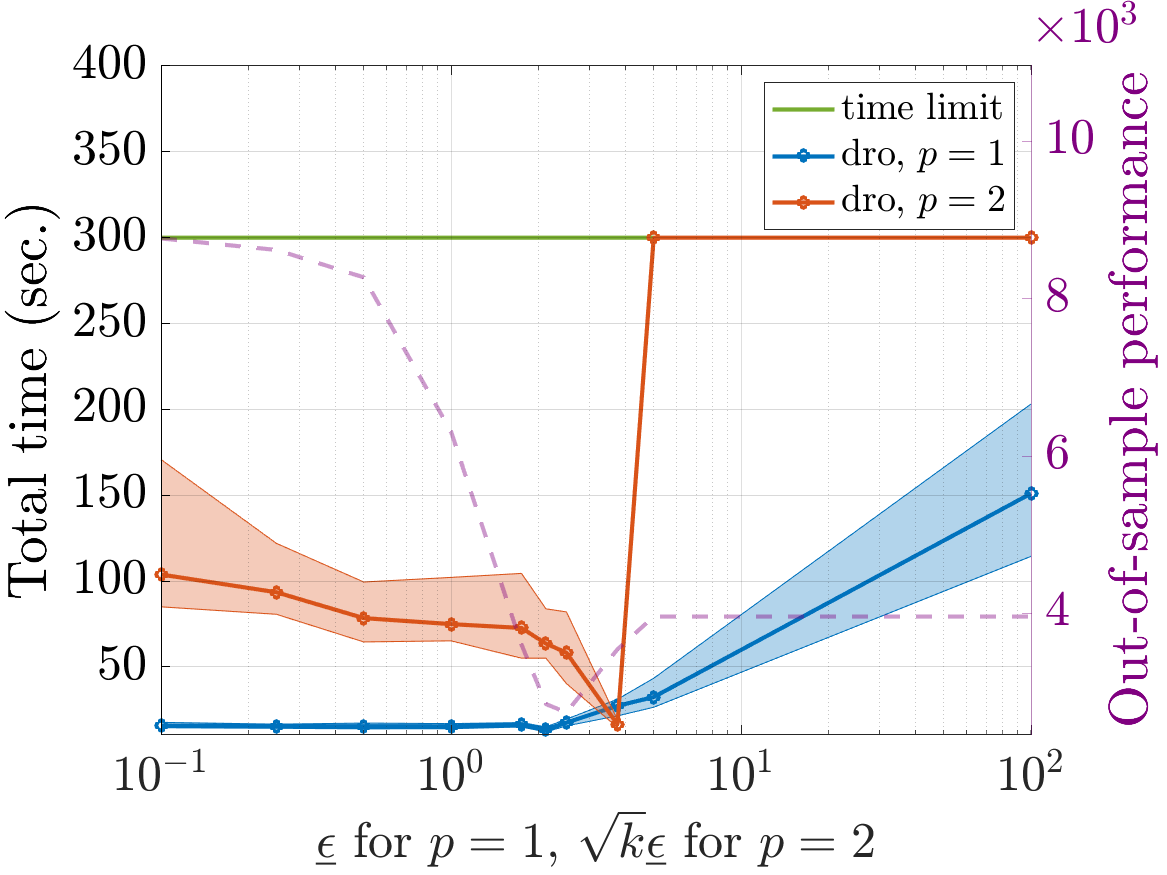}                       
      \caption{\texttt{p21}}\label{fig:time:p21}
  \end{subfigure} 
  \begin{subfigure}[b]{0.4\textwidth}
      \centering      
      \includegraphics[width=\textwidth]{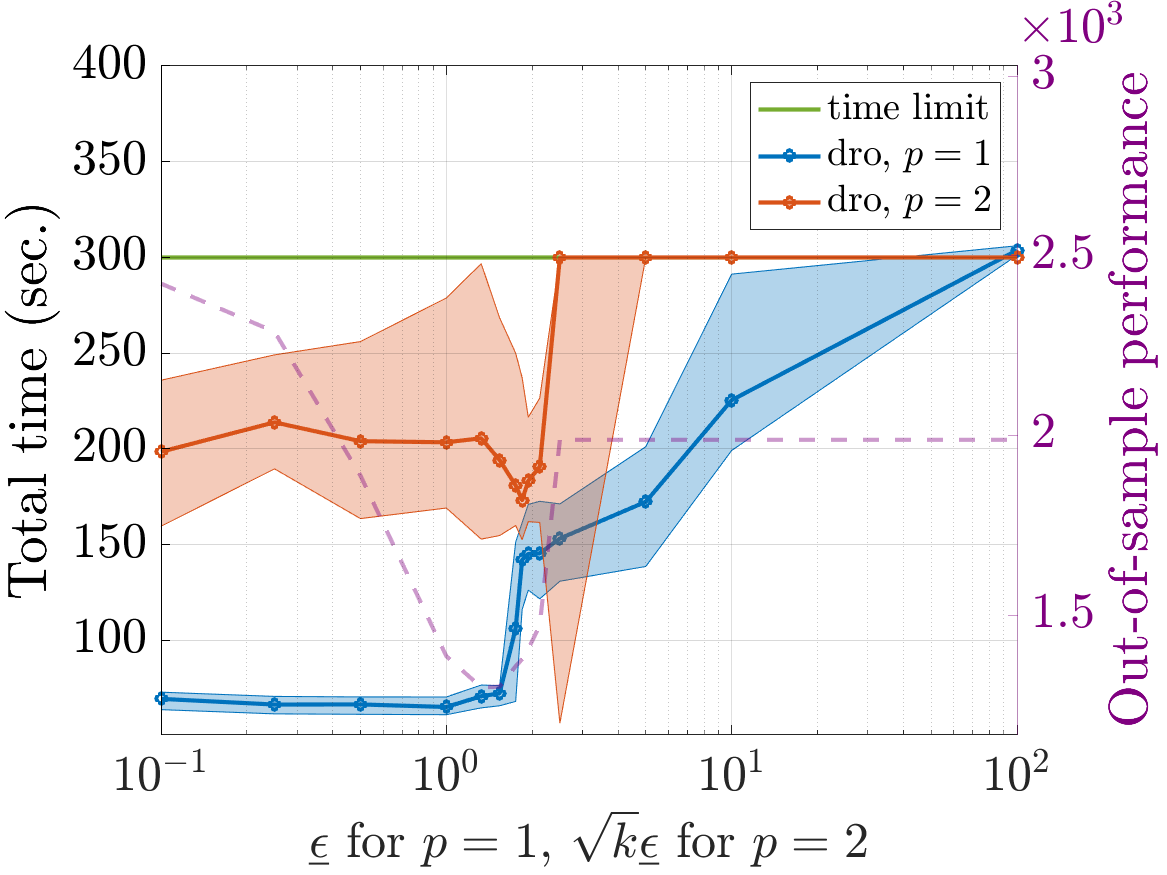}                       
      \caption{\texttt{p43}}\label{fig:time:p43}
  \end{subfigure} 
  \caption{Computation times} 
  \label{fig:time}
\end{figure}
The great out-of-sample performance of the TSDRO with $l_2$-norm has a trade-off in the computation time. In Figure \ref{fig:time}, the median computation times (solid lines), along with the 20th and 80th percentiles (shaded regions), are illustrated for $p=1$ and $p=2$ across 50 simulations for each choice of $\underline \epsilon$. The green line represents the time limit set at 300 seconds, and the dashed line plots the median of $\tilde o_{l_2}(\underline \epsilon)$ for reference.

As the Wasserstein radius increases, TSDRO with the $l_2$-norm tends to exhibit an increase in computation time. However, crucially, for the optimal choice of $\underline\epsilon$, it consistently produces an optimal solution within the specified time limit of 300 seconds. On the other hand, TSDRO with the $l_1$-norm also shows a tendency to slow down as $\underline \epsilon$ increases, but it often maintains a faster computational pace compared to TSDRO with the $l_2$-norm.

\section{Conclusion}\label{sec:conclusion}
This paper studied the two-stage distributionally robust conic linear programming problems over 1-Wasserstein balls with constraint uncertainty. 
We proved that the dual of the worst-case expectation problem can always find its worst-case $\xi$ for each sample point in the sample point itself or the boundary points of the support set. For a special case when $l_1$-norm is used to define the Wasserstein distance and the support is a hyperrectangle, a more stringent result follows that the worst-case $\xi$ can be obtained either as a sample point or on the boundary of a projection of the support set onto each axis of $\xi$. The derived optimality conditions offer a compelling revelation: with an unconstrained support, this problem class effectively reduces to norm maximization and sample average approximation, aligning with prior research (see \cite{esfahani2018data, hanasusanto2018conic}). Furthermore, our work has exposed potential inaccuracies in a claim made in \ \cite{duque2022distributionally}, which we substantiate with a compelling counterexample. A cutting plane algorithm of \cite{duque2022distributionally} is adopted to solve a more broader class of problems and we showed its finite $\epsilon$-convergence with a less stringent assumption. Numerical experiments on facility location problem numerically validate our finding, which suggests the superiority of the distributionally robust optimization with $l_2$-norm over the one with $l_1$-norm in terms of the out-of-sample performance.  
% %
% % For tables use
% \begin{table}
% % table caption is above the table
% \caption{Please write your table caption here}
% \label{tab:1}       % Give a unique label
% % For LaTeX tables use
% \begin{tabular}{lll}
% \hline\noalign{\smallskip}
% first & second & third  \\
% \noalign{\smallskip}\hline\noalign{\smallskip}
% number & number & number \\
% number & number & number \\
% \noalign{\smallskip}\hline
% \end{tabular}
% \end{table}

% \section{Experiments}\label{sec:experiments}
\begin{appendix}
  \section{Proof of Lemma \ref{lemm:Lipschitz}}\label{pf:lemm:Lipschitz}
  \begin{proof} From Assumption \ref{assum:recourse}, \eqref{prob:second-stage:dual} 
  has a nonempty compact feasible region $\Pi$; thus, $Z(x,\xi)$ % = \max_{\pi \in \Pi} \left\{\pi^T(h(x) + T(x)\xi)\right\}$ 
  is attained for any choice of $x \in \mb R^{n_x}$ and $\xi \in \mb R^k$ by the Weierstrass theorem. Let $\pi_{x,\xi}$ be an optimal dual solution of $Z(x,\xi)$. 
  %Suppose $x$ is given. Then, since $h(\xi)$ and $T(\xi)$ are affine in $\xi$, $h(\xi) - T(\xi)x$ can be expressed as an affine function of $\xi$, say $M_x\xi+l_x$. 
  Note that for any $\xi,\xi' \in \mb R^k$, 
  \begin{align*}
    \pi_{x,\xi'}^T T(x)(\xi-\xi') \le Z(x, \xi) - Z(x, \xi') & \le \pi_{x,\xi}^TT(x)(\xi-\xi')\\
    \Leftrightarrow   |Z(x, \xi) - Z(x, \xi')|  & \le \max\{|\pi_{x,\xi'}^T T(x)(\xi-\xi')|, |\pi_{x,\xi}^T T(x)(\xi-\xi')|\}\\
    & \le \max\{\|\pi_{x,\xi'}\|_q ,\|\pi_{x,\xi}\|_q\}\|T(x)\|_p \|\xi - \xi'\|_p,
                            %  & \le \max_{\pi \in \Pi} \|\pi\|_q  \|T(x)\|_p \|\xi - \xi'\|_p,
  \end{align*}
  where the first and second inequalities hold due to the suboptimality of $\pi_{x, \xi'}$ and $\pi_{x,\xi}$ for $Z(x,\xi)$ and $Z(x,\xi')$, respectively. The last inequality follows by Hölder's inequality in which $p$ and $q$ are chosen from $[1, \infty]$ so that $1/p+1/q=1$ and using a matrix norm induced by $l_p$-norm. Since $\Pi$ is compact, $L(x):=$ $\max_{\pi \in \Pi} \|\pi\|_q \|T(x)\|_p$ is well defined and can serve as the Lipschitz constant of $Z(x,\cdot)$.
  \end{proof}

    \end{appendix}

% {\small\noindent
% \textbf{Conflicts of interests/Competing interests} 
% The authors have no competing interests to declare that are relevant to the content of this article.
% }

% \section*{Conflict of interest}
%
% The authors declare that they have no conflict of interest.

\bibliographystyle{siamplain}
\bibliography{references}

\end{document}